\theoremstyle{plain}
\newtheorem{thm}{Theorem}[section]
\newtheorem{lem}[thm]{Lemma}
\newtheorem{prop}[thm]{Proposition}
\newtheorem{prob}[thm]{Problem}
\theoremstyle{definition}
\newtheorem{ex}{Example}
\newtheorem{rem}[thm]{Remark}
\DeclareMathOperator{\per}{per}
\DeclareMathOperator{\sign}{sgn}
\title{A note on graphs with purely imaginary per-spectrum}
\author{Ranveer Singh\thanks{\texttt{ranveer@iiti.ac.in}, Department of Computer Science and Engineering, Indian Institute of Technology Indore, Indore 453552, India.} \and Hitesh Wankhede\thanks{ \texttt{hiteshwankhede9@gmail.com}, Department of Computer Science and Engineering, Indian Institute of Technology Indore, Indore 453552, India.} \thanks{This author is currently at The Institute of Mathematical Sciences (HBNI), Chennai 600113, India.} }
\begin{document}
        \maketitle
\begin{abstract}
	In 1983, Borowiecki and Jóźwiak posed the problem ``Characterize those graphs which have purely imaginary per-spectrum.'' This problem is still open. The most general result, although a partial solution, was given in 2004 by Yan and Zhang, who show that if $G$ is a bipartite graph containing no subgraph which is an even subdivision of $K_{2,3}$, then it has purely imaginary per-spectrum. Zhang and Li in 2012 proved that such graphs are planar and admit a Pfaffian orientation. In this article, we describe how to construct graphs with purely imaginary per-spectrum having a subgraph which is an even subdivision of $K_{2,3}$ (planar and nonplanar) using coalescence of rooted graphs. 
\end{abstract}

	Key words: Permanental Polynomial, Bipartite Graphs, Theta Graphs, Coalescence.\\
	
    AMS subject classification (2020): 05C05, 05C31, 05C50, 05C76.

\section{Introduction}
We consider simple and undirected graphs. Let $V(G)$ and $E(G)$ denote the vertex set and the edge set of a graph $G$, respectively. If two vertices $i, j \in V (G)$ are adjacent, then we write $i \sim j$. The \emph{adjacency matrix} $A(G)=(a_{ij})$ of a graph $G$ with $V (G) = \{1,\dots,n\}$ is the $n \times n$ matrix in which $a_{ij} = 1$ if $i \sim j$, and $0$ otherwise. The \emph{determinant} and the \emph{permanent} of $A(G)$, are defined as 
$$\det (A(G))=\sum_{\sigma \in S_n}\sign(\sigma)\prod_{i=1}^{n}a_{i,\sigma(i)},$$ 
$$\per(A(G))=\sum_{\sigma \in S_n}\prod_{i=1}^{n}a_{i,\sigma(i)},$$ respectively, where $S_n$ is the set of all permutation of the set $\{1,2,\dots, n\}$ and $\sign(\sigma)$ is the signature of the permutation $\sigma$. The \emph{characteristic polynomial} and the \emph{permanental polynomial} of graph $G$ are defined as $$\phi(G,x)=\det(xI-A(G)),$$ $$\pi(G,x)=\per(xI-A(G)),$$ respectively, where $I$ is the identity matrix of order $n$. The set of all roots (counting multiplicities) of $\phi(G,x)$ and $\pi(G,x)$ is called the \emph{spectrum} and the \emph{per-spectrum} of $G$, respectively. Let $\sigma(G)$ and $\sigma_p(G)$ denote the spectrum and per-spectrum of a graph $G$, respectively. Two graphs are \emph{cospectral} if they have the same spectrum and \emph{per-cospectral} if they have the same per-spectrum. Both the spectrum and the per-spectrum are graph invariants. Because if two graphs are isomorphic, then they must necessarily have the same spectrum and the per-spectrum. The converse need not be true. 

A graph is said to be \emph{determined by its spectrum (per-spectrum)}, if any other graph which is cospectral (per-cospectral) to it is also isomorphic. Haemers and van Dam have conjectured that `Almost all graphs are determined by their spectrum' \cite{haemers_conj, vandam_haemers}. Much work has been done on studying graphs using the characteristic polynomial and its roots in Spectral Graph Theory, but very less using the permanental polynomial in comparison \cite{2016permanental}. The eigenvalues of a matrix are the roots of its characteristic polynomial, but the familiar Linear Algebra is not much helpful when studying the permanent as it is a combinatorial function. However, much work has been done on finding bounds on the permanent \cite{bapat2007recent, zhang2016update}. Further, the determinant and the permanent look quite similar by definition, but computationally they are extremely different. The determinant can be computed in polynomial time using LUP decomposition, while computing permanent is a $\#$P-complete problem \cite{valiant1979complexity}. The `Computation of Determinant vs. Permanent' is a significant problem in Computational Complexity Theory \cite{agrawal2006determinant, banpermanent}. This computational dichotomy is also explored using the general notion of immanants in \cite{curticapean2021full}. Some numerical evidence suggests that the permanental polynomial might be better than the characteristic polynomial in distinguishing graphs \cite{ dehmer2017highly, liu2014enumeration}. This tradeoff between computational hardness and the graph characterizing power is an interesting area to study. For an excellent survey on permanental polynomial, we refer to \cite{2016permanental}. For some recent work on permanental polynomial, we refer to \cite{zhang2012computing, li2016graphs, li2016skew, li2018graphs, li2021matching, wu2018constructing}. 

Let $K_{m,n}$ denote the \emph{complete bipartite graph} with partite sets of size $m$ and $n$, respectively. In case $m=1$, we get a \emph{star graph} with $n$ pendent vertices. Let $K_n$, $P_n$, and $C_n$ denote the \emph{complete graph}, the \emph{path graph}, and the \emph{cycle graph} on $n$ vertices, respectively. Note that the spectrum is always real, but the per-spectrum may not be real. Sachs (1978) had noted that if $G$ is a tree such that $\sigma(G)=\{\lambda_1,\lambda_2,\dots,\lambda_n\}$, then $\sigma_p(G)=\{i\lambda_1,i\lambda_2,\dots,i\lambda_n\}$. Borowiecki (1985) \cite{borowiecki1985spectrum} generalized this observation by showing that a bipartite graph $G$ contains no $C_{k}$ for any $k \equiv 0$ (mod $4$) if and only if $\sigma(G)=\{\lambda_1,\lambda_2,\dots,\lambda_n\}$ and $\sigma_p(G)=\{i\lambda_1,i\lambda_2,\dots,i\lambda_n\}$. Thus, if $G_1$ and $G_2$ are two bipartite graphs without $C_{k}$ for any $k \equiv 0$ (mod $4$), then $G_1$ and $G_2$ are cospectral if and only if they are per-cospectral. In other words, the permanental polynomial is not any more useful than the characteristic polynomial in characterization of such graphs. Earlier, Borowiecki and Jóźwiak (1983) had posed the following open problem. 

\begin{prob}\label{prob:char_imag_perspec}\cite{borowiecki1983note}
    Characterize those graphs which have purely imaginary per-spectrum.\footnote{Zero is also considered to be a purely imaginary number. The per-spectrum is purely imaginary if each of the roots of the permanental polynomial is purely imaginary.}
\end{prob}

Let $\mathcal{G}$ denote the class of graphs with purely imaginary per-spectrum. The \textit{subdivision} of an edge $(i,j)$ is the operation of replacing $(i,j)$ with a path $(i,k,j)$ through a new vertex $k$. Let $G = G_0, G_1, G_2, \dots, G_k$ be a sequence of graphs such that for each $i$, $G_i$ can be obtained from $G_{i−1}$ by subdividing an edge twice. Then, $G_k$ is said to be an even subdivision of $G$. Yan and Zhang (2004) \cite{yan2004permanental} have given the most general but a partial solution to this problem. They showed that if $G$ is a bipartite graph containing no subgraph which is an even subdivision of $K_{2,3}$, then there exists an orientation $G^e$ with skew-adjacency matrix $A(G^e)$ such that $\pi(G, x) = \det(xI − A(G^e))$. The converse was proved by Zhang and Li (2012) \cite{zhang2012computing} along with showing that such graphs are planar and the orientation is Pfaffian. They also give a linear time algorithm to determine whether a given bipartite graph contains no even subdivision of $K_{2,3}$. Since the skew-adjacency matrix $A(G^e)$ is skew-symmetric, it has purely imaginary eigenvalues. It follows that the class of bipartite graphs containing no subgraph of an even subdivision of $K_{2,3}$ is a subset of $\mathcal{G}$. Thus, we have 
\begin{center}
    Class of trees $\subseteq$ Class of bipartite graphs without $C_{k}$ for any $k \equiv 0$ (mod $4$) $\subseteq$ Class of graphs without a subgraph which is an even subdivision of $K_{2,3}$ $\subseteq$ $\mathcal{G}$. 
\end{center}

In this article, we wish to study the following problem. 
\begin{prob}
    Let $G$ be a bipartite graph containing a subgraph which is an even subdivision of $K_{2,3}$. When does it admit purely imaginary per-spectrum? 
\end{prob}

The rest of paper is organised as follows. In Section \ref{sect:prelim}, we recall some definitions, known results, and compute permanental polynomial of rooted trees. In Section \ref{sect:construction}, we describe a construction of graphs with purely imaginary per-spectrum and demonstrate it for some special cases.

\section{Preliminaries} \label{sect:prelim}

The permanental polynomial of a graph $G$ can also be computed using its Sachs subgraphs, that is, subgraphs whose components are either edges or cycles of length at least 3. 

\begin{prop} \cite{borowiecki1980computing} \label{sachs-per}
Let $G$ be a graph on $n$ vertices with  $\pi(G,x)=\sum_{k=0}^{n}b_kx^{n-k}$. Then, $$b_k=(-1)^k\sum_{U_k}2^{c(U_k)},$$
	 where the summation is taken over all Sachs subgraphs $U_k$ on $k$ vertices of $G$, and $c(U_k)$ denotes the number of components of $U_k$ which are cycles. 
\end{prop}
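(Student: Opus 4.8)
The plan is to expand the permanent $\per(xI-A(G))$ directly as a sum over permutations and then sort the resulting monomials by their fixed‑point structure. Writing $A(G)=(a_{ij})$ with $a_{ii}=0$ (no loops), the matrix $M=xI-A(G)$ has $m_{ii}=x$ and $m_{ij}=-a_{ij}$ for $i\neq j$, so
\[
\pi(G,x)=\per(M)=\sum_{\sigma\in S_n}\ \prod_{i\,:\,\sigma(i)=i}x\ \prod_{i\,:\,\sigma(i)\neq i}(-a_{i,\sigma(i)}).
\]
For a fixed $\sigma$ let $F(\sigma)$ be its set of fixed points. The corresponding summand is nonzero precisely when $i\sim\sigma(i)$ for every $i\notin F(\sigma)$, and in that case it equals $(-1)^{\,n-|F(\sigma)|}\,x^{|F(\sigma)|}$. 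Collecting the coefficient of $x^{n-k}$ therefore gives
\[
b_k=(-1)^k\,\bigl|\{\sigma\in S_n : |F(\sigma)|=n-k,\ i\sim\sigma(i)\ \text{for all}\ i\notin F(\sigma)\}\bigr|,
\]
since every admissible such $\sigma$ has exactly $k$ non‑fixed points and hence contributes the same factor $(-1)^k$.

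Next I would exhibit a bijection between these admissible permutations and Sachs subgraphs equipped with a choice of orientation on each long‑cycle component. Given an admissible $\sigma$, restrict it to the $k$‑set $S=V(G)\setminus F(\sigma)$; then $\sigma|_S$ is a fixed‑point‑free permutation of $S$, so its cycle decomposition consists of cycles of length $\geq 2$ whose consecutive entries are adjacent in $G$. A $2$‑cycle $(i\ j)$ corresponds to the edge $\{i,j\}\in E(G)$, and a cycle $(i_1\ i_2\ \cdots\ i_\ell)$ with $\ell\geq 3$ corresponds to a cycle $C_\ell$ of $G$ on $\{i_1,\dots,i_\ell\}$. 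Forgetting the cyclic orientation, the union of these pairwise vertex‑disjoint components is a Sachs subgraph $U_k$ on $k$ vertices. Conversely, each Sachs subgraph $U_k$ is obtained in exactly $2^{c(U_k)}$ ways: every edge component forces a unique transposition, while every cycle component of length $\geq 3$ can be traversed in either of two directions, and these choices are independent across components. Substituting the count $\sum_{U_k}2^{c(U_k)}$ into the formula for $b_k$ yields the claim.

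I expect the only delicate point to be the bookkeeping in this last bijection: one must check that $2$‑cycles are matched with single undirected edges without an extraneous factor of $2$, that cycle components of length at least $3$ contribute exactly the factor $2$ (reversal being the only nontrivial re‑parametrisation of a cycle), and that these local factors multiply to $2^{c(U_k)}$ precisely because the components are vertex‑disjoint. It is worth remarking that the argument runs in parallel to the classical Sachs coefficient formula for $\phi(G,x)$; the sole difference is that $\per$ omits $\sign(\sigma)$, so the cycle‑length‑dependent signs that otherwise produce the factor $(-1)^{(\text{number of components})}$ collapse, leaving the uniform factor $(-1)^k$ alongside the same powers of $2$.
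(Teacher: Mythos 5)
Your argument is correct and complete: the expansion of $\per(xI-A(G))$ by fixed-point sets, followed by the bijection between fixed-point-free adjacency-respecting permutations on a $k$-set and Sachs subgraphs with an orientation chosen on each cycle component (one transposition per edge component, two traversals per cycle of length at least $3$), is exactly the standard proof of this coefficient formula. The paper itself offers no proof --- it cites Borowiecki's result --- and your write-up is the argument that citation stands for, the permanental analogue of the Sachs coefficient theorem with the signs collapsing to the uniform factor $(-1)^k$.
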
  

As a consequence, we get the following. 
\begin{prop} \cite{borowiecki1980computing, borowiecki1985spectrum, spectraofgraphs} \label{prop:bip_coeff}
    A graph $G$ is bipartite if and only if $b_k=0$ for each odd $k\in \{0,1,2,\dots,n\}$
\end{prop}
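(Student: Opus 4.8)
The plan is to reduce the statement to a purely combinatorial fact about Sachs subgraphs, exploiting Proposition \ref{sachs-per}. The crucial observation is that in the formula $b_k=(-1)^k\sum_{U_k}2^{c(U_k)}$, every summand $2^{c(U_k)}$ is strictly positive, so there can be no cancellation; consequently $b_k=0$ if and only if $G$ has no Sachs subgraph on $k$ vertices. Hence it suffices to prove that $G$ is bipartite if and only if $G$ has no Sachs subgraph whose vertex set has odd cardinality.

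For the forward implication I would take an arbitrary Sachs subgraph $U$ of $G$, say consisting of $a$ independent edges together with vertex-disjoint cycles $C_{\ell_1},\dots,C_{\ell_c}$, so that $|V(U)|=2a+\ell_1+\cdots+\ell_c$. If $G$ is bipartite then every cycle of $G$ has even length, so each $\ell_j$ is even and $|V(U)|$ is even. Thus $G$ has no Sachs subgraph of odd order, and by the observation above $b_k=0$ for every odd $k\in\{0,1,\dots,n\}$.

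For the converse I would argue by contraposition. If $G$ is not bipartite, then $G$ contains a cycle $C_\ell$ of odd length $\ell$. Viewed as a subgraph, $C_\ell$ is itself a Sachs subgraph on the odd number $\ell$ of vertices, having a single cycle component, so it contributes $2^{1}$ to the sum $\sum_{U_\ell}2^{c(U_\ell)}$. By positivity of all summands, $\sum_{U_\ell}2^{c(U_\ell)}\ge 2$, whence $b_\ell=(-1)^\ell\sum_{U_\ell}2^{c(U_\ell)}\neq 0$ with $\ell$ odd. This contradicts the hypothesis that all odd-indexed coefficients vanish, completing the proof.

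I do not expect a genuine obstacle here; the only point requiring care is the appeal to positivity of the terms $2^{c(U_k)}$, which is what upgrades ``no odd Sachs subgraph $\Rightarrow$ $b_k=0$'' into an equivalence rather than a one-way implication. An alternative, slightly slicker packaging would be to record the parity identity $|V(U)|\equiv \#\{\text{odd cycles in }U\}\pmod 2$ for any Sachs subgraph $U$, from which ``$G$ has an odd-order Sachs subgraph'' $\iff$ ``$G$ has an odd cycle'' $\iff$ ``$G$ is not bipartite'' follows immediately, and then combine this with the no-cancellation observation.
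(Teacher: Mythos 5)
Your proof is correct, and it follows exactly the route the paper intends: the paper states this proposition as an immediate consequence of Proposition \ref{sachs-per} (citing the literature rather than spelling out an argument), and your no-cancellation observation together with the parity of Sachs subgraph orders is precisely the standard way that consequence is obtained. Nothing is missing; the appeal to positivity of the terms $2^{c(U_k)}$ and the existence of an odd cycle in a non-bipartite graph are the only points that need care, and you handle both.
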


Next, we show that graphs with purely imaginary per-spectrum are bipartite. 
\begin{prop}\label{prop:gbir}
	If $G \in \mathcal{G}$, then $G$ is bipartite. 
\end{prop}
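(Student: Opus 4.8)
The plan is to use Proposition~\ref{prop:bip_coeff}, which reduces the claim to showing that $b_k = 0$ for every odd $k$ whenever the per-spectrum of $G$ is purely imaginary. First I would record that $\pi(G,x) = \per(xI - A(G))$ is a monic polynomial with integer, in particular real, coefficients, since every entry of $xI - A(G)$ is real; hence the non-real roots of $\pi(G,x)$ occur in complex-conjugate pairs with equal multiplicities.

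Next, suppose every root of $\pi(G,x)$ is purely imaginary, so that $\sigma_p(G) = \{i\mu_1,\dots,i\mu_n\}$ with each $\mu_j \in \mathbb{R}$. Complex conjugation sends $i\mu$ to $-i\mu$, so the nonzero purely imaginary roots split into pairs $\{i\mu,-i\mu\}$, each contributing a factor $(x-i\mu)(x+i\mu) = x^2 + \mu^2$ to $\pi(G,x)$, while the roots equal to $0$ contribute a factor $x^m$ for some $m \ge 0$ with $m \equiv n \pmod 2$. Therefore $\pi(G,x) = x^m\, q(x^2)$ for a suitable polynomial $q$, which means every monomial occurring in $\pi(G,x)$ has degree congruent to $n$ modulo $2$.

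Writing $\pi(G,x) = \sum_{k=0}^{n} b_k x^{n-k}$, the term $b_k x^{n-k}$ can be nonzero only when $n-k \equiv n \pmod 2$, i.e.\ only when $k$ is even; hence $b_k = 0$ for every odd $k \in \{0,1,\dots,n\}$, and Proposition~\ref{prop:bip_coeff} then gives that $G$ is bipartite. The argument is short, and the only point requiring care is the bookkeeping of multiplicities and of the possible zero eigenvalue when $n$ is odd; I note that one could alternatively cite Proposition~\ref{sachs-per} to see that the $b_k$ are real, but reality of the coefficients is already immediate from the definition of $\pi(G,x)$, so no essential obstacle arises.
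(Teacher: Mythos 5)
Your proof is correct and follows essentially the same route as the paper: both arguments reduce the claim to showing $b_k=0$ for odd $k$ via Proposition~\ref{prop:bip_coeff}, using the reality of the coefficients of $\pi(G,x)$ together with the purely imaginary roots. The paper observes directly that for odd $k$ the coefficient $b_k$ carries a factor of $i$ and hence must vanish, whereas you pair conjugate roots to write $\pi(G,x)=x^m q(x^2)$; this is only a minor variation in bookkeeping, not a different argument.
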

\begin{proof}
	Let $\sigma_p(G) = \{i\lambda_1,i\lambda_2,\dots,i\lambda_n\}$, then $\pi(G,x) = (x-i\lambda_1)(x-i\lambda_2)\dots(x-i\lambda_n)$. If $k$ is odd, then $b_k$ has the factor $i$. But since $\pi(G,x)$ has all real coefficients, $b_k$ must be zero whenever $k$ is odd. Proof follows from Proposition \ref{prop:bip_coeff}.
\end{proof}

Zhang, Liu and Li give a characterization of bipartite graphs in terms of the per-spectrum.

\begin{thm}\cite{zhang2014note} \label{thm:bip_spec_char}Let $G$ be a graph. Then, 
\begin{enumerate}
\item $\pi(G,x)$ has no negative root.
    \item If all the roots of $\pi(G,x)$ are real, then $G$ is empty.
    \item If $G$ is bipartite, then $\pi(G,x)$ has no real root except $0$.
    \item $G$ is bipartite if and only if the per-spectrum is symmetric with respect to the real and the imaginary axes.
\end{enumerate}
\end{thm}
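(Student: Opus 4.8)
The plan is to derive all four parts from the Sachs coefficient formula of Proposition~\ref{sachs-per}. Writing $\pi(G,x)=\sum_{k=0}^{n}b_kx^{n-k}$, that formula gives $(-1)^kb_k=\sum_{U_k}2^{c(U_k)}\ge 0$ for every $k$, together with the boundary values $b_0=1$, $b_1=0$ (there is no Sachs subgraph on a single vertex), and $b_2=|E(G)|$ (the only Sachs subgraphs on two vertices are single edges). Each of the four claims should follow quickly from these facts.

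For part~(1) I would put $x=-t$ with $t>0$ and use $xI-A(G)=-\big(tI+A(G)\big)$, so that $\pi(G,-t)=(-1)^n\per\!\big(tI+A(G)\big)$; since $tI+A(G)$ is entrywise nonnegative with all diagonal entries equal to $t>0$, its permanent is at least the diagonal product $t^n>0$, and hence $\pi(G,x)\neq 0$ for every $x<0$. (Equivalently, $\pi(G,-t)=(-1)^n\sum_k (-1)^kb_k\,t^{n-k}$ is $(-1)^n$ times a sum of nonnegative terms whose leading term is $t^n$.) For part~(2), part~(1) shows every real root lies in $[0,\infty)$, so if all $n$ roots $r_1,\dots,r_n$ are real then they are nonnegative; comparing coefficients, $b_1=-\sum_i r_i$, and $b_1=0$ forces every $r_i=0$, whence $b_2=\sum_{i<j}r_ir_j=0$, i.e.\ $|E(G)|=0$ and $G$ is empty.

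For part~(3), if $G$ is bipartite then Proposition~\ref{prop:bip_coeff} makes $b_k=0$ for all odd $k$, so $\pi(G,x)=x^{\varepsilon}q(x^2)$, where $\varepsilon\in\{0,1\}$ has the parity of $n$ and $q$ has nonnegative coefficients with leading coefficient $b_0=1$; then $q(y)\ge y^{\deg q}>0$ for all $y>0$, so a nonzero real root $r$ of $\pi(G,x)$ would make $r^2>0$ a root of $q$, which is impossible, leaving $0$ as the only possible real root. For part~(4) I would first note that $\pi(G,x)$ always has real coefficients (as $A(G)$ is a real matrix), so the per-spectrum is automatically symmetric about the real axis; hence the statement reduces to ``$G$ bipartite $\iff$ the per-spectrum is symmetric about the imaginary axis'', and, combined with conjugation symmetry, this is equivalent to the root multiset being invariant under $z\mapsto -z$. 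If $G$ is bipartite, $\pi(G,x)$ involves only the powers $x^{n-k}$ with $k$ even, so $\pi(G,-x)=(-1)^n\pi(G,x)$ and $z$ is a root iff $-z$ is. Conversely, invariance of the root multiset under $z\mapsto -z$ gives $\pi(G,-x)=(-1)^n\pi(G,x)$; matching the coefficient of $x^{n-k}$ yields $b_k\big((-1)^{n-k}-(-1)^n\big)=0$, so $b_k=0$ whenever $k$ is odd, and Proposition~\ref{prop:bip_coeff} then gives that $G$ is bipartite.

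None of the steps is deep. The part that needs the most care is~(4): one must convert the geometric phrase ``symmetric with respect to the real and imaginary axes'' — with multiplicities — into the algebraic identity $\pi(G,-x)=(-1)^n\pi(G,x)$ before running the parity bookkeeping, and also observe that the real-axis symmetry there carries no information. A minor secondary point is to phrase~(3) so that the leading term $x^n$ (coefficient $b_0=1$) is explicitly used to force strict positivity of $q$ on $(0,\infty)$.
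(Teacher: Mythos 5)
Your proof is correct in all four parts. Note that the paper itself does not prove this theorem at all: it is quoted with a citation to Zhang, Liu and Li \cite{zhang2014note}, so there is no in-paper argument to compare against; what you have produced is a self-contained derivation from Proposition~\ref{sachs-per} (giving $(-1)^k b_k\ge 0$, $b_0=1$, $b_1=0$, $b_2=|E(G)|$) and Proposition~\ref{prop:bip_coeff}, which is exactly the right toolkit and is in the same spirit as the proof in the cited source. The individual steps all check out: $\pi(G,-t)=(-1)^n\per\bigl(tI+A(G)\bigr)$ with $\per\bigl(tI+A(G)\bigr)\ge t^n>0$ settles (1); $b_1=-\sum_i r_i=0$ with nonnegative real roots forces all roots zero and then $b_2=|E(G)|=0$ settles (2); the factorization $\pi(G,x)=x^{\varepsilon}q(x^2)$ with $q$ monic and nonnegative coefficients settles (3); and your reduction of the two-axis symmetry (given real coefficients) to the identity $\pi(G,-x)=(-1)^n\pi(G,x)$, followed by the parity comparison of coefficients and an appeal to both directions of Proposition~\ref{prop:bip_coeff}, settles (4). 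The one point worth keeping explicit, as you do, is that the symmetry statement is about the root \emph{multiset}, so that invariance under $z\mapsto -z$ really yields the polynomial identity used in the converse of (4).
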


As a consequence of Proposition \ref{prop:gbir} and Theorem \ref{thm:bip_spec_char}(4), a graph $G \in \mathcal{G}$ if and only if its permanental polynomial has the form $$\pi(G,x)=x^{n-2k}(x^2+c_1)(x^2+c_2)\dots(x^2+c_k),$$ where $c_i>0$ for each $i$ for some $k$. Let $G-S$ denote the induced subgraph on the vertex set $V(G)-S$ for some $S\subseteq V(G)$. In case $S=\{r\}$, we denote it by $G-r$. 

Next, we recall a recursive formula to compute $\pi(G,x)$ of a graph $G$.

\begin{lem}\label{perpoly-vertexrem}\cite{borowiecki1980computing}    
    Let $G$ be a graph, and let $\Gamma_u(G)$ denote the set of cycles passing through a vertex $u \in V(G)$. Then,  $$\pi(G,x) = x\pi(G-u,x) + \sum_{v\sim u}\pi(G-u-v,x) +2\sum_{C\in \Gamma_u(G)} (-1)^{|V(C)|}\pi(G-V(C),x),$$ where $V(C)$ is the set of vertices in the cycle $C$.
\end{lem}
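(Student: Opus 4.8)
The plan is to prove the recurrence by a direct combinatorial analysis of the permanent expansion along the ``diagonal'' structure of $xI - A(G)$, organized by how the identity permutation's cycle through $u$ is modified. Recall that $\pi(G,x) = \per(xI - A(G)) = \sum_{\sigma \in S_n} \prod_{i=1}^n (xI - A(G))_{i,\sigma(i)}$, where the entry is $x$ when $i = \sigma(i)$, equals $-a_{ij}$ when $i \neq \sigma(i)$ (so it contributes $-1$ precisely when $i \sim \sigma(i)$), and $0$ otherwise. A nonzero term therefore corresponds to a permutation $\sigma$ whose non-fixed points decompose into cycles, each of which traces a cycle or a transposition (i.e. a traversal of an edge) in $G$; the standard observation is that each permanent term decomposes into fixed points (contributing factors of $x$), $2$-cycles using an edge $\{i,j\}$ (contributing $(-1)(-1) = 1$ from the two off-diagonal $-1$ entries), and longer cycles of length $\ell \geq 3$ that trace a cycle $C$ in $G$ (contributing $(-1)^{\ell}$). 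The key point is that each graph-cycle of length $\ell \geq 3$ is traced by exactly two directed permutation-cycles (clockwise and counterclockwise), which accounts for the factor of $2$ appearing in the last sum.

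First I would classify every nonzero permanent term according to the role of the vertex $u$. Exactly one of three mutually exclusive cases holds: (i) $u$ is a fixed point of $\sigma$, (ii) $u$ lies in a $2$-cycle of $\sigma$, i.e. $\sigma(u) = v$ and $\sigma(v) = u$ for some neighbor $v \sim u$, or (iii) $u$ lies in a cycle of $\sigma$ of length $\ell \geq 3$ tracing a graph-cycle $C \in \Gamma_u(G)$. In case (i), the factor of $x$ from the diagonal entry at $u$ factors out, and the remaining product ranges exactly over permutations of $V(G) \setminus \{u\}$, yielding $x\,\pi(G-u,x)$. In case (ii), for each fixed neighbor $v \sim u$, the two entries $(xI-A)_{u,v}$ and $(xI-A)_{v,u}$ each contribute $-1$, so their product is $+1$, and the remaining factors range over permutations of $V(G) \setminus \{u,v\}$, giving $\pi(G-u-v,x)$; summing over all $v \sim u$ produces $\sum_{v \sim u} \pi(G-u-v,x)$. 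In case (iii), a graph-cycle $C$ of length $|V(C)| = \ell$ is traced in either of two directions, each direction contributing $\prod (-1) = (-1)^{\ell}$ over its $\ell$ off-diagonal $-1$ entries; the remaining factors range over permutations of $V(G) \setminus V(C)$, yielding $2(-1)^{|V(C)|}\pi(G-V(C),x)$ after accounting for both orientations, and summing over $C \in \Gamma_u(G)$ gives the final term.

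The main obstacle, and the step requiring the most care, is verifying the sign bookkeeping in case (iii) and confirming that the two orientations of each graph-cycle are the only permutation-cycles through $u$ realizing $C$, so that the factor of $2$ is exactly right and no double counting occurs. Since this is a permanent (all terms counted with a $+$ sign) rather than a determinant, I need not track the signature $\sign(\sigma)$; instead every sign comes solely from the $-1$ off-diagonal entries of $xI - A(G)$, so a cycle through $\ell$ vertices and $\ell$ edges contributes the clean factor $(-1)^{\ell} = (-1)^{|V(C)|}$ regardless of orientation. I would also check the boundary bookkeeping: a loop or multi-edge cannot occur since $G$ is simple, and for $\ell = 2$ the two orientations coincide (the transposition is its own reverse), which is precisely why $2$-cycles are handled separately in case (ii) without a factor of $2$.

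Finally I would assemble the three contributions. Because the three cases partition the nonzero permanent terms, summing their contributions reconstructs $\per(xI - A(G)) = \pi(G,x)$ and gives exactly
$$\pi(G,x) = x\,\pi(G-u,x) + \sum_{v\sim u}\pi(G-u-v,x) + 2\sum_{C\in \Gamma_u(G)} (-1)^{|V(C)|}\pi(G-V(C),x),$$
completing the proof. I expect the whole argument to mirror the classical Schwenk-type recurrence for the characteristic polynomial, with the simplification that all signs arise from edge entries rather than from permutation signatures, which is exactly what turns the familiar subtraction in the determinantal version into addition here.
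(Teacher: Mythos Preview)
The paper does not supply its own proof of this lemma: it is stated with a citation to Borowiecki (1980) and used as a black box. Your argument is correct and is the standard derivation, expanding $\per(xI-A(G))$ and partitioning the nonzero terms according to whether $u$ is a fixed point, lies in a transposition with a neighbor, or lies in a longer permutation cycle tracing a graph cycle through $u$; the sign bookkeeping and the factor of $2$ from the two orientations of each cycle of length $\geq 3$ are handled correctly.
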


Let $(T,r)$ denote a tree $T$ rooted at the vertex $r$. Let $u_1, u_2,\ldots, u_k$ denote the children of the root $r$, and $(T_1,u_1),(T_2,u_2), \ldots, (T_k,u_k)$ denote the corresponding rooted subtrees for some $k\geq 0$. We write $(T,r) = \mathcal{R}(T_1,T_2,\dots, T_k,r)$. In case $T_1=T_2=\dots=T_k$, let $T':=T_i$ and $u':=u_i$ for each $i=1,\dots k$, then we write $(T,r)=\mathcal{R}(T'^{(k)},r)$. Next, we give a recursive formula to compute permanental polynomial of rooted trees. 
\begin{prop}\label{prop:per-rooted-tree-recur}
     Consider a rooted tree $(T,r) = \mathcal{R}(T_1,T_2,\dots, T_k,r)$. Then,  $$\pi(T,x) = x\pi(T-r,x)+  \sum_{i=1}^{k}\left (\pi(T_i-u_i,x)\prod_{j=1, j\neq i}^{k} \pi(T_j,x) \right ).$$
    In case $(T,r)=\mathcal{R}(T'^{(k)},r)$, then the expression simplifies to $$\pi(T,x) = x\pi(T-r,x) +  k\left (\pi(T'-u',x) \pi(T',x)^{k-1}\right ).$$
\end{prop}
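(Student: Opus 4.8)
The plan is to apply the vertex-removal recursion (Lemma~\ref{perpoly-vertexrem}) with the removed vertex being the root $r$, and then to simplify each term using the tree structure. First I would note that since $T$ is a tree, the set $\Gamma_r(T)$ of cycles through $r$ is empty, so the last sum in Lemma~\ref{perpoly-vertexrem} vanishes and we are left with
\begin{equation*}
\pi(T,x) = x\,\pi(T-r,x) + \sum_{v\sim r}\pi(T-r-v,x).
\end{equation*}
The neighbours of $r$ are exactly the children $u_1,\dots,u_k$, so the sum ranges over $i=1,\dots,k$ with $v=u_i$.

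The key step is to identify $\pi(T-r-u_i,x)$ and, along the way, $\pi(T-r,x)$, in terms of the rooted subtrees. Deleting $r$ from $T$ disconnects it into the components $T_1,\dots,T_k$ (the subtrees hanging from each child), so by multiplicativity of the permanental polynomial over connected components, $\pi(T-r,x)=\prod_{j=1}^{k}\pi(T_j,x)$. Deleting both $r$ and $u_i$ leaves the components $T_1,\dots,T_{i-1},T_i-u_i,T_{i+1},\dots,T_k$ (where $T_i-u_i$ may itself be disconnected, but that does not matter since $\pi$ is still multiplicative), hence $\pi(T-r-u_i,x)=\pi(T_i-u_i,x)\prod_{j=1,j\neq i}^{k}\pi(T_j,x)$. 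Substituting these two identities into the reduced recursion gives the first displayed formula. The ``only'' facts I need are that $\pi$ of a disconnected graph is the product of the $\pi$'s of its components (which follows directly from Proposition~\ref{sachs-per}, since a Sachs subgraph of a disjoint union is a disjoint union of Sachs subgraphs, and both the count of vertices and the count of cyclic components add across components) and the elementary observation about which components appear after deleting $r$ or $\{r,u_i\}$ from a rooted tree.

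For the special case $(T,r)=\mathcal{R}(T'^{(k)},r)$, every $T_i$ equals $T'$ and every $u_i$ equals $u'$, so $\pi(T_j,x)=\pi(T',x)$ for all $j$ and $\pi(T_i-u_i,x)=\pi(T'-u',x)$ for all $i$; the $i$-th summand becomes $\pi(T'-u',x)\,\pi(T',x)^{k-1}$, independent of $i$, and there are $k$ of them, yielding $k\,\pi(T'-u',x)\,\pi(T',x)^{k-1}$. I do not anticipate a real obstacle here; the only point requiring a word of care is the multiplicativity of $\pi$ over connected components and the fact that it continues to hold even when $T_i-u_i$ is a forest rather than a tree, so I would state that lemma explicitly (or cite it) before invoking it.
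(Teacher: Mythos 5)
Your proposal is correct and follows essentially the same route as the paper: apply Lemma~\ref{perpoly-vertexrem} at the root (the cycle sum vanishing since $T$ is a tree), use multiplicativity of $\pi$ over the components $T_1,\dots,T_k$ of $T-r$ and of $T-r-u_i$, and substitute; the equal-subtree case then follows immediately. Your extra remark justifying multiplicativity via Sachs subgraphs is a point the paper leaves implicit, but it is not a different argument.
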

\begin{proof}
    First, note that $T-r =\cup_{i=1}^{k}T_i$. Hence, for each $i=1,\dots k$, we have 
    $$\pi(T-r, x)=\prod_{i=1}^{k} \pi(T_i, x), \text{and } \pi(T-r-u_i,x) = \pi(T_i-u_i,x)\prod_{j=1, j\neq i}^{k} \pi(T_j,x).$$ Using Lemma \ref{perpoly-vertexrem}, we get 
    \begin{align*}
    \pi(T,x)  & = x\pi(T-r,x)+ \sum_{i=1}^{k}\pi(T-r-u_i,x).
    \end{align*}
    Proof follows by substitutions, and the second statement follows as a corollary.  
\end{proof}

The permanental polynomial of star graphs and path graphs can be computed directly using the following proposition. 
\begin{prop}\label{propath}
    \begin{enumerate}
    \item \cite[Proposition 2.10.]{liu2013characterizing} $\pi(K_{1,n},x) = x^{n-1}(x^2+n)$.
    \item \cite[Theorem 3.1.]{liu2013characterizing} $\pi(P_n,x) = \sum_{m=0}^{\lfloor n/2 \rfloor}\binom{n-m}{n-2m}x^{n-2m}.$
\end{enumerate}
\end{prop}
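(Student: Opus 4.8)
The plan is to handle both parts through Sachs subgraphs (Proposition~\ref{sachs-per}). Since $K_{1,n}$ and $P_n$ are forests, they contain no cycles, so every Sachs subgraph is a matching, each such subgraph on $k=2m$ vertices has $c(U_k)=0$ and sign $(-1)^{2m}=+1$, and therefore $\pi(G,x)=\sum_{m\ge 0}\mu(G,m)\,x^{|V(G)|-2m}$, where $\mu(G,m)$ is the number of $m$-edge matchings of $G$. Both statements then reduce to counting matchings, or alternatively to solving a short recurrence.

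For part~(1), I would note that $K_{1,n}$ has $n+1$ vertices and its only matchings are the empty matching and the $n$ single edges (any two edges share the center vertex), so $\mu(K_{1,n},0)=1$, $\mu(K_{1,n},1)=n$, and $\mu(K_{1,n},m)=0$ for $m\ge 2$; this gives $\pi(K_{1,n},x)=x^{n+1}+n x^{n-1}=x^{n-1}(x^2+n)$. Even more directly, one can apply Proposition~\ref{prop:per-rooted-tree-recur} to $(K_{1,n},r)=\mathcal{R}(K_1^{(n)},r)$: here $\pi(T',x)=x$, $\pi(T'-u',x)=1$ (empty graph, with the convention $\pi(\emptyset,x)=1$), and $\pi(T-r,x)=x^n$, so $\pi(K_{1,n},x)=x\cdot x^n+n\cdot 1\cdot x^{n-1}$, which is the claimed formula in one line.

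For part~(2), I would set up the recurrence by applying Lemma~\ref{perpoly-vertexrem} at a leaf $u$ of $P_n$: since $P_n$ has no cycles, $\Gamma_u(P_n)=\emptyset$, we have $P_n-u\cong P_{n-1}$, and the unique neighbor $v$ of $u$ gives $P_n-u-v\cong P_{n-2}$, hence $\pi(P_n,x)=x\,\pi(P_{n-1},x)+\pi(P_{n-2},x)$ with base cases $\pi(P_0,x)=1$ and $\pi(P_1,x)=x$. Writing $f_n(x)$ for the claimed sum, the induction step splits $x f_{n-1}(x)+f_{n-2}(x)$, reindexes the second sum by $m\mapsto m-1$, and collapses the coefficient of $x^{n-2m}$ via Pascal's rule $\binom{n-1-m}{m}+\binom{n-1-m}{m-1}=\binom{n-m}{m}=\binom{n-m}{n-2m}$, with the $m=0$ term checked separately. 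Alternatively one can sidestep the recurrence and prove $\mu(P_n,m)=\binom{n-m}{n-2m}$ directly by the standard bijection between $m$-matchings of a path on $n$ vertices and placements of $m$ non-overlapping dominoes on $n$ cells.

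Neither part presents a genuine obstacle; the only places that need mild care are the bookkeeping at the extreme values of $m$ (together with the conventions $\pi(\emptyset,x)=1$ and $\binom{n-m}{n-2m}=0$ when $2m>n$) and the single use of Pascal's identity in the inductive step. I would therefore present part~(1) as the one-line consequence of Proposition~\ref{prop:per-rooted-tree-recur} and part~(2) as a brief induction on $n$ built on the leaf recurrence above.
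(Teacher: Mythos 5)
Your proposal is correct and follows essentially the same route as the paper: part (1) via the rooted-tree recursion of Proposition~\ref{prop:per-rooted-tree-recur} (equivalently a one-step matching count), and part (2) via the recurrence $\pi(P_n,x)=x\pi(P_{n-1},x)+\pi(P_{n-2},x)$, which the paper also derives (through Proposition~\ref{prop:per-rooted-tree-recur} with $k=1$, $T_i=P_{n-1}$, the same computation as your leaf application of Lemma~\ref{perpoly-vertexrem}). The only difference is that you carry out the solution of the recurrence explicitly by induction with Pascal's rule (or the domino bijection), whereas the paper simply remarks that it can be solved by generating series or by Proposition~\ref{sachs-per}; your details are a valid instance of exactly those suggestions.
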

\begin{proof}
 For the star graph, substitute $k=n$ and $T_i=K_1$, and for the path graph, substitute $k=1$ and $T_i=P_{n-1}$ in Proposition \ref{prop:per-rooted-tree-recur}. For the path graph, we get $\pi(P_n,x) = x\pi(P_{n-1},x) + \pi(P_{n-2},x)$ where $\pi(P_0,x)=1$, $\pi(P_1,x)=x$. This recurrence can be solved either using generating series or using Proposition \ref{sachs-per}. 
\end{proof}

We say that a rooted tree $(T,r)$ is \textit{starlike} if $(T,r)=\mathcal{R}(K_{1,l}^{(k)},r)$, and \textit{pathlike} if $(T,r)=\mathcal{R}(P_{l+1}^{(k)},r)$ for some $l\geq 0$. See Figure \ref{fig:starlike}, \ref{fig:pathlike}. Note that for a rooted starlike tree $T$, the subtrees $(K_{1,l},u')$ are rooted at the central vertex of degree $l$. So consequently $\pi(T',x)=x^{l-1}(x^2+l)$, and $\pi(T'-u',x)=\pi(lK_1,x)=x^{l}$, where $lK_1$ denotes $l$ copies of $K_1$. For a rooted pathlike tree $T$, we have $\pi(T',x)=\pi(P_{l+1},x)$, and $\pi(T'-u',x)=\pi(P_{l},x)$. 

\begin{figure}[ht]
     \centering
     \begin{subfigure}[b]{0.30\textwidth}
     \centering
         \begin{tikzpicture}[scale=0.4]
\draw[fill=black] (0,4) circle (3pt);
\draw[fill=black] (0,0) circle (3pt);
\draw[fill=black] (4,0) circle (3pt);
\draw[fill=black] (-4,0) circle (3pt);

\draw[fill=black] (-2.5,-4) circle (3pt);
\draw[fill=black] (-3.5,-4) circle (3pt);
\draw[fill=black] (-4.5,-4) circle (3pt);
\draw[fill=black] (-5.5,-4) circle (3pt);

\draw[fill=black] (2.5,-4) circle (3pt);
\draw[fill=black] (3.5,-4) circle (3pt);
\draw[fill=black] (4.5,-4) circle (3pt);
\draw[fill=black] (5.5,-4) circle (3pt);

\draw[fill=black] (-1.5,-4) circle (3pt);
\draw[fill=black] (-0.5,-4) circle (3pt);
\draw[fill=black] (0.5,-4) circle (3pt);
\draw[fill=black] (1.5,-4) circle (3pt);

\node at (-0.8,4) {$r$};
\node at (-5,0) {$u_1$};
\node at (3,0) {$u_3$};
\node at (-0.8,0) {$u_2$};

\draw[thick] (0,4) -- (0,0);
\draw[thick] (0,4) -- (-4,0);
\draw[thick] (0,4) -- (4,0);
\draw[thick] (4,0) -- (5.5,-4);
\draw[thick] (4,0) -- (4.5,-4);
\draw[thick] (4,0) -- (3.5,-4);
\draw[thick] (4,0) -- (2.5,-4);
\draw[thick] (0,0) -- (1.5,-4);
\draw[thick] (0,0) -- (0.5,-4);
\draw[thick] (0,0) -- (-0.5,-4);
\draw[thick] (0,0) -- (-1.5,-4);
\draw[thick] (-4,0) -- (-2.5,-4);
\draw[thick] (-4,0) -- (-3.5,-4);
\draw[thick] (-4,0) -- (-4.5,-4);
\draw[thick] (-4,0) -- (-5.5,-4);

\end{tikzpicture}
         \caption{Starlike tree $\mathcal{R}(K_{1,4}^{(3)},r)$}
    \label{fig:starlike}
     \end{subfigure}\begin{subfigure}[b]{0.30\textwidth}
     \centering
         \begin{tikzpicture}[scale=0.4]
\draw[fill=black] (0,4) circle (3pt);

\draw[fill=black] (-4,0.5) circle (3pt);
\draw[fill=black] (-4,-0.5) circle (3pt);
\draw[fill=black] (-4,-1.5) circle (3pt);
\draw[fill=black] (-4,-2.5) circle (3pt);
\draw[fill=black] (-4,-3.5) circle (3pt);

\draw[fill=black] (0,0.5) circle (3pt);
\draw[fill=black] (0,-0.5) circle (3pt);
\draw[fill=black] (0,-1.5) circle (3pt);
\draw[fill=black] (0,-2.5) circle (3pt);
\draw[fill=black] (0,-3.5) circle (3pt);

\draw[fill=black] (4,0.5) circle (3pt);
\draw[fill=black] (4,-0.5) circle (3pt);
\draw[fill=black] (4,-1.5) circle (3pt);
\draw[fill=black] (4,-2.5) circle (3pt);
\draw[fill=black] (4,-3.5) circle (3pt);

\node at (-0.8,4) {$r$};
\node at (-5,0.5) {$u_1$};
\node at (3,0.5) {$u_3$};
\node at (-0.8,0.5) {$u_2$};;

\draw[thick] (0,4) -- (0,0.5);
\draw[thick] (0,4) -- (-4,0.5);
\draw[thick] (0,4) -- (4,0.5);

\draw[thick] (4,0.5) -- (4, -0.5) -- (4, -1.5) -- (4, -2.5) -- (4, -3.5);
\draw[thick] (0,0.5) -- (0, -0.5) -- (0, -1.5) -- (0, -2.5) -- (0, -3.5);
\draw[thick] (-4,0.5) -- (-4, -0.5) -- (-4, -1.5) -- (-4, -2.5) -- (-4, -3.5);
\end{tikzpicture}
        \caption{Pathlike tree $\mathcal{R}(P_{5}^{(3)},r)$}
    \label{fig:pathlike}
     \end{subfigure}\begin{subfigure}[b]{0.30\textwidth}
    \centering
    \begin{tikzpicture}[scale=0.5]
\draw[fill=black] (-3,0) circle (3pt);
\draw[fill=black] (-1,0) circle (3pt);
\draw[fill=black] (1,0) circle (3pt);
\draw[fill=black] (3,0) circle (3pt);

\draw[fill=black] (0,-1.5) circle (3pt);

\draw[fill=black] (-1.5,1.5) circle (3pt);
\draw[fill=black] (0,1.5) circle (3pt);
\draw[fill=black] (1.5,1.5) circle (3pt);


\node at (-3.5,0) {$u$};
\node at (3.5,0) {$v$};

\draw[thick] (-3,0) -- (-1,0) -- (1,0) -- (3,0);
\draw[thick] (-3,0) -- (0,-1.5) -- (3,0);
\draw[thick] (-3,0) -- (-1.5,1.5) -- (0,1.5) -- (1.5,1.5) -- (3,0);

\end{tikzpicture}
    \caption{Theta graph $\Theta_{1,2,3}$}
    \label{fig:theta123}
\end{subfigure}

        \caption{}
        \label{incfl}
\end{figure}

Let $u$, $v$ be two vertices, and $a,b,c\geq 1$. A \textit{theta graph} $\Theta_{a,b,c}(u,v)$ is a graph on $a+b+c+2$ vertices such that there are three disjoint paths from $u$ to $v$ of lengths $a+1$, $b+1$ and $c+1$. See Figure \ref{fig:theta123}. Any theta graph is a homeomorphic image of $K_{2,3}$ as it can be obtained by repeated subdivisions of $K_{2,3}$ \footnote{A recursive formula to compute the characteristic polynomial of a theta graph in one of the parameters was given by Sciriha and Fiorini \cite{sciriha_k23}.}. For example, we have $\Theta_{1,1,1}=K_{2,3}$. An even subdivision of $K_{2,3}$ is a theta graph with all odd parameters. Note that after removing a vertex of degree $3$ from a theta graph, we get a pathlike tree. Hence, we can obtain a formula for $\pi(\Theta_{a,b,c},x)$ in terms of the permanental polynomial of path graphs using Lemma \ref{perpoly-vertexrem}.

Consider two rooted graphs $(G_1,r_1)$ and $(G_2,r_2)$. By identifying $r_1$ in $G_1$ with $r_2$ in $G_2$, we construct a new graph $G_1\cdot G_2$ called the \textit{coalescence} of $G_1$ and $G_2$. Schwenk (1974) proved the following relation which holds for both the characteristic and the permanental polynomial. 
\begin{lem}\label{lem:coal-poly-recur}\cite{schwenk1974computing}
    Let $(G_1,r_1)$ and $(G_2,r_2)$ be two rooted graphs, then 
    $$\pi(G_1\cdot G_2,x) = \pi(G_1,x)\pi(G_2-r_2,x) + \pi(G_1-r_1,x)\pi(G_2,x) - x\pi(G_1-r_1,x)\pi(G_2-r_2,x).$$
\end{lem}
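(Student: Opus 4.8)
The plan is to apply the vertex-removal recursion of Lemma~\ref{perpoly-vertexrem} to the coalescence $G_1\cdot G_2$ at the identified vertex $r:=r_1=r_2$, and then to exploit the fact that $r$ is a cut vertex of $G_1\cdot G_2$ so that every term produced by the recursion factors as a polynomial coming from $G_1$ times one coming from $G_2$.

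First I would record the structural facts that the proof rests on. Since $G_1\cdot G_2$ is built by gluing $G_1$ and $G_2$ along $r$ and no edges are added between $G_1-r_1$ and $G_2-r_2$, we have $(G_1\cdot G_2)-r=(G_1-r_1)\cup(G_2-r_2)$ (disjoint union), the neighbourhood of $r$ in $G_1\cdot G_2$ is the disjoint union of the neighbourhoods of $r_1$ in $G_1$ and of $r_2$ in $G_2$, and every cycle of $G_1\cdot G_2$ through $r$ lies entirely in $G_1$ or entirely in $G_2$, i.e. $\Gamma_r(G_1\cdot G_2)=\Gamma_{r_1}(G_1)\cup\Gamma_{r_2}(G_2)$. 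I would also use that $\pi$ is multiplicative over disjoint unions, $\pi(H_1\cup H_2,x)=\pi(H_1,x)\pi(H_2,x)$, which is immediate from the block-diagonal structure of the permanental matrix, since any permutation contributing to the permanent must preserve each block.

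With these in hand, apply Lemma~\ref{perpoly-vertexrem} with $u=r$. The term $x\pi((G_1\cdot G_2)-r,x)$ becomes $x\pi(G_1-r_1,x)\pi(G_2-r_2,x)$. In the neighbour sum, a neighbour $v$ of $r$ lying in $G_1$ contributes $\pi(G_1-r_1-v,x)\pi(G_2-r_2,x)$ and one lying in $G_2$ contributes $\pi(G_1-r_1,x)\pi(G_2-r_2-v,x)$; in the cycle sum, a cycle $C$ inside $G_1$ contributes $(-1)^{|V(C)|}\pi(G_1-V(C),x)\pi(G_2-r_2,x)$ and one inside $G_2$ contributes $(-1)^{|V(C)|}\pi(G_1-r_1,x)\pi(G_2-V(C),x)$. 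Collecting all terms carrying the factor $\pi(G_2-r_2,x)$, the bracket multiplying it is exactly $\sum_{v\sim r_1}\pi(G_1-r_1-v,x)+2\sum_{C\in\Gamma_{r_1}(G_1)}(-1)^{|V(C)|}\pi(G_1-V(C),x)$, which by Lemma~\ref{perpoly-vertexrem} applied to $G_1$ equals $\pi(G_1,x)-x\pi(G_1-r_1,x)$; symmetrically the bracket multiplying $\pi(G_1-r_1,x)$ equals $\pi(G_2,x)-x\pi(G_2-r_2,x)$. Substituting these two identities and performing the short expansion (the $x\pi(G_1-r_1,x)\pi(G_2-r_2,x)$ contributions combine to a single such term with coefficient $-1$) yields the claimed formula.

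The computation is essentially bookkeeping; the one point that requires care — and the only step that genuinely uses the hypothesis that $G_1\cdot G_2$ is a coalescence and not an arbitrary graph through $r$ — is the claim that the neighbours of $r$ and the cycles through $r$ split cleanly between the two sides. This is where the cut-vertex property is essential: it rules out a "mixed" cycle through $r$ that uses vertices of both $G_1-r_1$ and $G_2-r_2$, so that after deleting such a cycle the remainder still factors as a disjoint union. Once that is secured, multiplicativity of $\pi$ over disjoint unions and a second application of Lemma~\ref{perpoly-vertexrem} to $G_1$ and to $G_2$ separately finish the argument.
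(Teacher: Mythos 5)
Your argument is correct, and it is worth noting that the paper itself offers no proof of this lemma at all: it is quoted from Schwenk's 1974 work (originally formulated for the characteristic polynomial, with the remark that it holds verbatim for the permanental polynomial, e.g.\ via the Sachs-type expansion of Proposition~\ref{sachs-per}, since the Sachs subgraphs of $G_1\cdot G_2$ meeting the identified vertex split cleanly between the two sides). Your route is different and self-contained within the paper's toolkit: you apply the vertex-deletion recursion of Lemma~\ref{perpoly-vertexrem} at the coalescence vertex $r$, and the three structural facts you isolate --- $(G_1\cdot G_2)-r=(G_1-r_1)\sqcup(G_2-r_2)$, the splitting of the neighbourhood of $r$, and the absence of ``mixed'' cycles through $r$ (which is exactly the cut-vertex property) --- are all correct, as is the multiplicativity of $\pi$ over disjoint unions from the block structure of $xI-A$. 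The closing step, recognizing the two collected brackets as $\pi(G_1,x)-x\pi(G_1-r_1,x)$ and $\pi(G_2,x)-x\pi(G_2-r_2,x)$ by a second application of Lemma~\ref{perpoly-vertexrem}, is a clean device, and the bookkeeping $(+1-1-1=-1)$ for the coefficient of $x\pi(G_1-r_1,x)\pi(G_2-r_2,x)$ checks out, so the stated identity follows. What your approach buys is a proof entirely internal to the machinery the paper already states; what the citation-based route buys is generality (Schwenk's identity in its original habitat) without redoing any computation. No gaps.
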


The next two results will be helpful in computations. 

\begin{lem}[Folklore]\label{lem:discrim}
	The cubic polynomial $ax^3+bx^2+cx^3+d$ with real coefficients has all its roots real if and only if $\Delta \geq 0$, where $\Delta=18abcd-4b^3d+b^2c^2-4ac^3-27a^2d^2$ is the discriminant. 
\end{lem}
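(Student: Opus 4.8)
The plan is to express the discriminant in terms of the roots of the cubic and then split into cases according to whether all roots are real. Write the cubic as $a(x-r_1)(x-r_2)(x-r_3)$ with $a\neq 0$ and $r_1,r_2,r_3\in\mathbb{C}$. First I would establish the classical identity
\[
\Delta \;=\; a^4\,(r_1-r_2)^2(r_1-r_3)^2(r_2-r_3)^2,
\]
using Vieta's formulas $r_1+r_2+r_3=-b/a$, $r_1r_2+r_1r_3+r_2r_3=c/a$, $r_1r_2r_3=-d/a$ to rewrite the symmetric polynomial $\prod_{i<j}(r_i-r_j)^2$ — which has weight $6$ in the roots — as a polynomial in the elementary symmetric functions, and then matching it against $18abcd-4b^3d+b^2c^2-4ac^3-27a^2d^2$. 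This matching is a finite (if tedious) computation, and the identity is in any case entirely standard; it is the step I expect to be the most mechanical, but it is the crux of the reduction.

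Granting that identity, the forward direction is immediate: if $r_1,r_2,r_3\in\mathbb{R}$, then each factor $(r_i-r_j)^2$ is a nonnegative real, so $\Delta=a^4\prod_{i<j}(r_i-r_j)^2\geq 0$. For the converse I would argue the contrapositive. A real cubic always has at least one real root, and its nonreal roots occur in conjugate pairs, so if the roots are not all real they must be $r_1\in\mathbb{R}$ and $r_2=\alpha+i\beta$, $r_3=\alpha-i\beta$ with $\beta\neq 0$. Then $(r_2-r_3)^2=(2i\beta)^2=-4\beta^2<0$, while $(r_1-r_2)(r_1-r_3)=(r_1-\alpha)^2+\beta^2>0$ forces $(r_1-r_2)^2(r_1-r_3)^2>0$; hence $\prod_{i<j}(r_i-r_j)^2<0$ and $\Delta=a^4\cdot(\text{negative})<0$. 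So $\Delta\geq 0$ implies that all three roots are real.

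Finally I would note that this also settles the boundary case: $\Delta=0$ exactly when the cubic has a repeated root, and a repeated root of a real cubic is necessarily real (a nonreal repeated root would drag in its conjugate with the same multiplicity, exceeding degree $3$), so all three roots are still real. Thus the condition ``$\Delta\geq 0$'' is indeed equivalent to ``all roots real'', not merely to ``all roots real and distinct''. The only genuine obstacle is the bookkeeping in verifying the discriminant identity; everything after that is a two-line case check.
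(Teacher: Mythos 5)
Your proof is correct. The paper states this lemma as ``Folklore'' and supplies no proof of its own, so there is nothing to compare against: your route --- establishing the classical identity $\Delta = a^4(r_1-r_2)^2(r_1-r_3)^2(r_2-r_3)^2$ via the symmetric-function computation, getting $\Delta\geq 0$ immediately when all roots are real, and in the contrapositive using the conjugate pair $\alpha\pm i\beta$ to force $\Delta = a^4\bigl((r_1-\alpha)^2+\beta^2\bigr)^2(-4\beta^2) < 0$ --- is the standard argument, and your handling of the boundary case $\Delta=0$ (a repeated root of a real cubic is necessarily real) correctly shows the equivalence is with ``all roots real,'' not ``all roots real and distinct.'' One small remark: the lemma as printed has a typo ($ax^3+bx^2+cx^3+d$ should be $ax^3+bx^2+cx+d$), and the hypothesis $a\neq 0$ is implicit in the word ``cubic''; you silently and correctly worked with the intended polynomial.
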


\begin{lem}[Descartes' rule of signs]\label{lem:decartes}
	Let $f(x)$ be an univariate polynomial with real coefficients ordered by descending power of the variable $x$. Then the number of positive roots of $f(x)$ is either equal to the number of sign changes between consecutive (nonzero) coefficients, or is less than it by an even number. Also, the number of negative roots of $f(x)$ is same the number of positive roots of $f(-x)$.
\end{lem}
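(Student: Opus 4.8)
The plan is to establish the statement about positive roots by induction on the number of positive roots, with one key lemma on multiplication by a linear factor doing the essential work; the statement about negative roots will then be immediate.

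Write $f(x)=\sum_{k=0}^{n}a_k x^k$, let $V(f)$ be the number of sign changes in the sequence of nonzero coefficients read by descending power, and let $Z(f)$ be the number of positive real roots of $f$ counted with multiplicity. I want to show that $V(f)-Z(f)$ is a nonnegative even integer, which is exactly the assertion ``equal, or less by an even number''. First I would reduce to the case $a_0\neq 0$: writing $f(x)=x^{s}g(x)$ with $g(0)\neq 0$ changes neither $V$ (trailing zero coefficients create no sign changes) nor the multiset of positive roots, so it suffices to treat $g$. Next, factor $f(x)=(x-c_1)\cdots(x-c_m)\,h(x)$ over $\mathbb{R}$, where $c_1,\dots,c_m>0$ are precisely the positive roots of $f$ with multiplicity (so $m=Z(f)$) and $h$ has no positive real root with $h(0)\neq 0$. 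Since $h$ does not vanish on $(0,\infty)$, its sign is constant there; that sign equals the sign of the constant term of $h$ (letting $x\to 0^{+}$) and also the sign of the leading coefficient of $h$ (letting $x\to+\infty$), so the first and last nonzero coefficients of $h$ agree in sign, which forces $V(h)$ to be even.

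The crux is the following lemma, which I would prove directly from the coefficient recursion: \emph{if $c>0$ and $g$ is a real polynomial with $g(0)\neq 0$, then $V\big((x-c)g\big)-V(g)$ is a positive odd integer.} Writing $(x-c)g(x)=\sum_{k}b_k x^k$ with $b_k=a_{k-1}-c\,a_k$, one observes that whenever $a_{k-1}$ and $a_k$ are nonzero of opposite signs there is no cancellation, so $\sign(b_k)=\sign(a_{k-1})$; tracking this observation through the coefficient list, together with the facts that the leading coefficient keeps its sign while the constant term flips it ($b_0=-c\,a_0$), one shows that every sign change of $g$ is ``inherited'' by $(x-c)g$ and that the sign flip at the bottom contributes at least one additional change, the overall parity being reversed. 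Granting this lemma, an induction on $m$ applied to $h,\ (x-c_m)h,\ \dots$ gives $V(f)\ge V(h)+m$ and $V(f)\equiv V(h)+m\equiv m\equiv Z(f)\pmod{2}$; since $V(h)$ is even, $V(f)-Z(f)$ is a nonnegative even integer, as required.

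Finally, the claim about negative roots is the trivial bijection $r\mapsto -r$ between the negative roots of $f(x)$ and the positive roots of $f(-x)$ (with multiplicities matching). I expect the main obstacle to be the key lemma: the bookkeeping needed to show simultaneously that multiplication by $(x-c)$ with $c>0$ strictly increases the number of sign changes \emph{and} reverses its parity — in particular carefully handling runs of zero coefficients — is where all the genuine work lies, while the reduction steps and the final induction are routine.
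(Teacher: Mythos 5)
The paper does not prove this lemma at all: Descartes' rule of signs is quoted as a classical result (like the folklore discriminant lemma beside it) and is only \emph{used}, in the proof of Theorem 3.2, to rule out negative roots of the cubic $q(x)$. So there is no in-paper argument to compare yours against; what can be judged is whether your outline is a sound proof of the classical statement, and it is. Your route is the standard one: reduce to $f(0)\neq 0$, factor off the positive roots $f=(x-c_1)\cdots(x-c_m)h$, observe $V(h)$ is even because $h$ has constant sign on $(0,\infty)$ so its first and last nonzero coefficients agree in sign, and then iterate the key multiplication lemma that $V\bigl((x-c)g\bigr)-V(g)$ is a positive odd integer for $c>0$; the negative-root clause is indeed just the substitution $x\mapsto -x$. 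One simplification you may want to exploit: the parity half of your key lemma needs no ``inheritance'' bookkeeping at all, since the parity of $V$ is determined solely by whether the first and last nonzero coefficients agree in sign, and multiplication by $(x-c)$ preserves the leading sign while flipping the constant term ($b_0=-c\,a_0$), so the parity reverses automatically; consequently you only need the inequality $V\bigl((x-c)g\bigr)\ge V(g)$, and oddness of the difference then forces it to be at least $1$. The genuine work is thus confined to that single inequality in the presence of runs of zero coefficients (your observation $\sign(b_k)=\sign(a_{k-1})$ applies only to adjacent nonzero coefficients of opposite sign, so a sign change separated by zeros must be tracked to a nearby index), which you correctly flag; with that detail written out, or with a citation to any standard algebra text, the proposal is complete and is exactly the level of justification the paper itself delegates to the literature.
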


\section{Construction}\label{sect:construction}

In this section, we describe a construction of graphs with purely imaginary per-spectrum and demonstrate it for some special cases. First, we discuss some examples. 

Bipartite graphs containing no subgraph which is an even subdivision of $K_{2,3}$ are planar, but not all planar graphs are in $\mathcal{G}$. Using Sagemath \cite{sagemath} computations, we observe that $K_{2,3}$ is the smallest planar graph not in $\mathcal{G}$. Let $G_8$ denote the graph on $8$ vertices obtained by subdividing any edge of $K_{2,4}$ twice. This graph $G_8$ is the smallest planar graph in $\mathcal{G}$ containing an even subdivision of $K_{2,3}$. Let $G_{11}$ denote the graph on $11$ vertices obtained as follows: Consider $K_{3,3}$ and $v \in V(K_{3,3})$. Take two edges adjacent to $v$ and subdivide each of them twice. Now add a new vertex $u$ and draw an edge between $u$ and $v$. This graph $G_{11}$ is the smallest nonplanar graph in $\mathcal{G}$ containing an even subdivision of $K_{2,3}$, and it also serves as a motivation for our construction using coalescence. The complete bipartite graph $K_{3,3}$ is the smallest nonplanar graph not in $\mathcal{G}$. See Figures \ref{fig:venn_diag} and \ref{fig:g8_g11} for these graphs. Their permanental polynomial and per-spectrum are as follows. 
\begin{itemize}
    \item $\pi(K_{2,3},x)=x(x^4+6x^2+12)$, \\$\sigma_p(K_{2,3})=\{0,\pm 0.48 \pm 1.80i\}$ (approximated to two decimals),
    \item $\pi(G_8,x)=x^2(x^6+10x^4+33x^2+36)$, \\$\sigma_p(G_8) = \{0, 0, \pm 2i, \pm \sqrt{3}i, \pm \sqrt{3}i \},$
    \item $\pi(K_{3,3},x)=x^6+9x^4+36x^2+36$, \\$\sigma_p(K_{3,3})=\{\pm 1.20i, \pm 0.78 \pm 2.10i\}$ (approximated to two decimals),
    \item $\pi(G_{11},x)=x(x^{10}+14x^8+72x^6+168x^4+172x^2+56)$, \\$\sigma_p(G_{11})=\{0, \pm i\sqrt{4+\sqrt{2}}, \pm i\sqrt{2+\sqrt{2}}, \pm i\sqrt{4-\sqrt{2}}, \pm i\sqrt{2}, \pm i \sqrt{2-\sqrt{2}} \}.$ 
\end{itemize}

\begin{figure}
    \centering 
    \begin{tikzpicture}[scale=0.8]
    \begin{scope}[shift={(3cm,-5cm)}, fill opacity=0.5,
    mytext/.style={text opacity=1,font=\large\bfseries}]
\draw (0,0) ellipse (5cm and 2.5cm);
    \draw (-1.1,0) ellipse (2.5cm and 1.5cm);
    \draw [fill=gray] (1.1,0) ellipse (2.5cm and 1.5cm);
    \draw [fill=white] (0,0) circle (0.6);
\node[mytext] at (0,2.1) (A) {$\mathcal{B}$};
    \node[mytext] at (-2.9,0) (B) {$\mathcal{P}$};
    \node[mytext] at (2.9,0) (C) {$\mathcal{G}$};
    \node[mytext] at (0,0) (D) {$\mathcal{K}$};
    \filldraw[black] (-2,1) circle (2pt) node[anchor=west]{\small $K_{2,3}$};
    \filldraw[black] (0,0.85) circle (2pt) node[anchor=west]{\small $G_8$};
    \filldraw[black] (2,1) circle (2pt) node[anchor=west]{\small $G_{11}$};
    \filldraw[black] (3.6,1) circle (2pt) node[anchor=west]{\small $K_{3,3}$};
\end{scope}
    \end{tikzpicture}
    \caption{$\mathcal{B}$: Bipartite Graphs, $\mathcal{P}$: Planar Bipartite Graphs, $\mathcal{G}$: Graphs with purely imaginary per-spectrum, $\mathcal{K}$: Graphs without a subgraph which is an even subdivision of $K_{2,3}$.}
    \label{fig:venn_diag}
\end{figure}
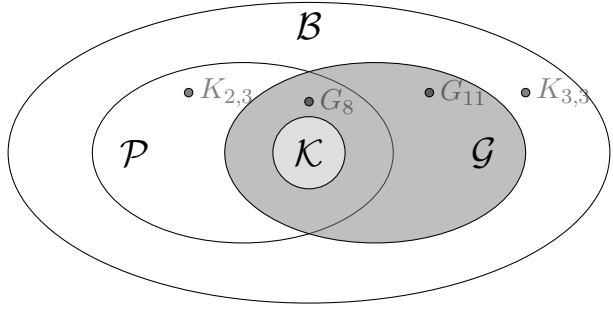

\begin{figure}
    \centering
    \includegraphics[scale=0.4]{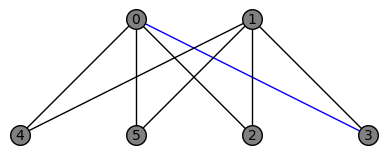}
    \includegraphics[scale=0.4]{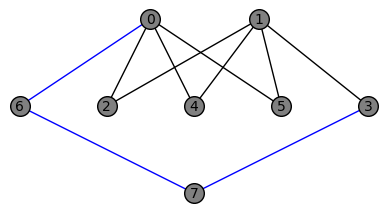}
    \includegraphics[scale=0.4]{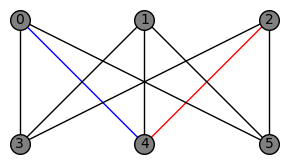}
    \includegraphics[scale=0.4]{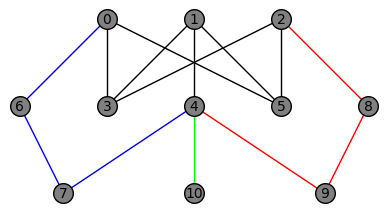}
    \caption{(Left to right) $K_{2,4}$, $G_{8}$, $K_{3,3}$ and $G_{11}$. By subdividing the blue edge twice in $K_{2, 4}$ we obtain $G_8$. By subdividing the blue and the red edge twice in $K_{3, 3}$, and drawing an edge between their common vertex and the newly added vertex, we obtain $G_{11}$.}
    \label{fig:g8_g11}
\end{figure}

Next, we consider the coalescence of a rooted graph and a rooted tree. 

\begin{lem}\label{thm:coal_graph_tree}
    Let $(G_1,r_1)$ be a rooted graph and $(T, r_2)=\mathcal{R}(T'^{(k)},r_2)$ be a rooted tree. Their coalescence $G=G_1\cdot T$ has purely imaginary per-spectrum if and only if the polynomial $$\mathcal{H}(G_1,T):=\pi(G_1,x)\pi(T',x) + k\pi(G_1-r_1,x)\pi(T'-u',x)$$ has all its roots purely imaginary.
\end{lem}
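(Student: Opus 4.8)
The plan is to collapse everything into a single polynomial identity using Schwenk's coalescence formula, and then read off the root condition. First I would apply Lemma~\ref{lem:coal-poly-recur} to $G = G_1 \cdot T$ with roots $r_1$ and $r_2$, obtaining
$$\pi(G,x) = \pi(G_1,x)\pi(T-r_2,x) + \pi(G_1-r_1,x)\pi(T,x) - x\,\pi(G_1-r_1,x)\pi(T-r_2,x).$$
Since $(T,r_2) = \mathcal{R}(T'^{(k)},r_2)$, the forest $T - r_2$ consists of $k$ disjoint copies of $T'$, so $\pi(T-r_2,x) = \pi(T',x)^k$; and by the second part of Proposition~\ref{prop:per-rooted-tree-recur}, $\pi(T,x) = x\,\pi(T',x)^k + k\,\pi(T'-u',x)\pi(T',x)^{k-1}$.

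Substituting these two expressions, the term $x\,\pi(G_1-r_1,x)\pi(T',x)^k$ occurs with opposite signs and cancels, leaving
$$\pi(G,x) = \pi(G_1,x)\pi(T',x)^k + k\,\pi(G_1-r_1,x)\pi(T'-u',x)\pi(T',x)^{k-1} = \pi(T',x)^{k-1}\,\mathcal{H}(G_1,T).$$
So $\pi(G,x)$ factors as $\pi(T',x)^{k-1}$ times $\mathcal{H}(G_1,T)$, and this identity carries the bulk of the proof; the rest is extracting the equivalence from it.

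For the equivalence: if $k = 1$ then $\pi(G,x) = \mathcal{H}(G_1,T)$ and the claim is immediate. If $k \ge 2$, I would note that $T'$ is a tree, hence bipartite with no $C_{4t}$, so by the Sachs/Borowiecki fact recalled in the introduction every root of $\pi(T',x)$ is purely imaginary. Thus in $\pi(G,x) = \pi(T',x)^{k-1}\mathcal{H}(G_1,T)$ the first factor already has all roots purely imaginary, and the full multiset of roots of $\pi(G,x)$ is purely imaginary if and only if that of $\mathcal{H}(G_1,T)$ is; since $G \in \mathcal{G}$ means exactly that $\sigma_p(G)$ is purely imaginary, this is the assertion. (For the forward direction one needs only that $\mathcal{H}(G_1,T)$ divides $\pi(G,x)$, so its roots form a sub-multiset of $\sigma_p(G)$.)

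I do not expect a serious obstacle; the one point requiring care is that $G_1$ is an \emph{arbitrary} rooted graph — it need not be bipartite, and $\mathcal{H}(G_1,T)$ need not be the permanental polynomial of any graph — so one cannot apply the structural characterizations of $\mathcal{G}$ (Proposition~\ref{prop:gbir}, Theorem~\ref{thm:bip_spec_char}) to $\mathcal{H}$ directly, and the argument must genuinely pass through the explicit factorization. The only external input is that trees belong to $\mathcal{G}$, which disposes of the $\pi(T',x)^{k-1}$ factor in the case $k \ge 2$.
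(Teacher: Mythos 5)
Your proposal is correct and follows essentially the same route as the paper: apply Schwenk's coalescence formula, substitute $\pi(T-r_2,x)=\pi(T',x)^k$ and the rooted-tree recursion for $\pi(T,x)$ to obtain $\pi(G,x)=\pi(T',x)^{k-1}\mathcal{H}(G_1,T)$, and then discard the factor $\pi(T',x)^{k-1}$ because trees have purely imaginary per-spectrum. Your extra remarks on the $k=1$ case and on why the factorization (rather than structural results about $\mathcal{G}$) must carry the argument are sound refinements of the same proof.
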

\begin{proof}
Applying Lemma \ref{lem:coal-poly-recur}, we get 
\begin{align*}
    \pi(G,x) &= \pi(G_1,x)\pi(T-r_2,x) + \pi(G_1-r_1,x)\pi(T,x) - x\pi(G_1-r_1,x)\pi(T-r_2,x)\\
    &= \pi(G_1,x)\pi(T-r_2,x) + \pi(G_1-r_1,x)\left (\pi(T,x) - x\pi(T-r_2,x) \right )\\
    &= \pi(G_1,x)\pi(T',x)^k + \pi(G_1-r_1,x)\left (k\pi(T'-u',x)\pi(T',x)^{k-1} \right )\\
    &= \pi(T',x)^{k-1}\left (\pi(G_1,x)\pi(T',x) + k\pi(G_1-r_1,x)\pi(T'-u',x) \right ).
\end{align*}
In the third step, we use Proposition \ref{prop:per-rooted-tree-recur}. We already know that a tree has purely imaginary per-spectrum. Hence, we can ignore the common factor $\pi(T',x)^{k-1}$. 
\end{proof}
In the remaining section, we take different examples of graph $G_1$, in particular, $K_{2,3}$ rooted at a degree $3$ vertex, $K_{2,3}$ rooted at a degree $2$ vertex, and a rooted $K_{3,3}$. As for the tree, we take $T$ to be a rooted starlike or a rooted pathlike tree.

Consider the case when $G_1=K_{2,3}$, and $T$ is starlike. 
\begin{thm}\label{thm:k23-starlike}
The coalescence of $(K_{2,3}, r_1)$ such that $degree(r_1)=3$ and rooted starlike tree $\mathcal{R}(K_{1,l}^{(k)},r_2)$ has purely imaginary per-spectrum if and only if any of the following holds:
\begin{enumerate}
    \item $l+k\geq 4$ when $l\leq 3$,
    \item $k\geq 2l-4$ when $l\geq 4$.
\end{enumerate}
\end{thm}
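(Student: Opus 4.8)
The plan is to apply Lemma~\ref{thm:coal_graph_tree} with $G_1 = K_{2,3}$ rooted at a degree-$3$ vertex $r_1$ and $T = \mathcal{R}(K_{1,l}^{(k)},r_2)$, and then decide for which $(l,k)$ the polynomial $\mathcal{H}(G_1,T)$ has purely imaginary per-spectrum. First I would assemble the ingredients. We have $\pi(K_{2,3},x) = x(x^4+6x^2+12)$ from the worked examples in Section~\ref{sect:construction}. Removing a degree-$3$ vertex from $K_{2,3}$ leaves $K_{2,2} = C_4$ together with nothing else (the two vertices on the size-$3$ side of degree $2$ plus one remaining — actually $K_{2,3}-r_1$ is the path $P_?$ — I would compute $\pi(K_{2,3}-r_1,x)$ directly via Proposition~\ref{sachs-per} or Lemma~\ref{perpoly-vertexrem}; it is a small fixed graph). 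For the tree, the remarks after Proposition~\ref{propath} give $\pi(T',x) = \pi(K_{1,l},x) = x^{l-1}(x^2+l)$ and $\pi(T'-u',x) = x^l$. Substituting into $\mathcal{H}$ yields
$$\mathcal{H}(G_1,T) = x^{l-1}\Big(x^2+l\Big)\,\pi(K_{2,3},x) + k\,x^{l}\,\pi(K_{2,3}-r_1,x),$$
which after factoring out the largest power of $x$ becomes $x^{\,m}$ times a polynomial in $y = x^2$, say $x^m\,q_{l,k}(y)$. Because $G$ is bipartite (it contains no odd cycle: $K_{2,3}$ is bipartite and we attach a tree), Proposition~\ref{prop:gbir}/Theorem~\ref{thm:bip_spec_char} tells us the per-spectrum is purely imaginary iff $q_{l,k}(y)$ has all roots real and nonnegative.

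The heart of the argument is then an elementary real-root analysis of the (low-degree, at most cubic or quartic in $y$) polynomial $q_{l,k}(y)$. I expect $q_{l,k}$ to have degree $3$ in $y$ once the powers of $x$ are stripped, with coefficients that are affine functions of $k$ for fixed $l$. The strategy is: (i) all coefficients of $q_{l,k}$ are positive — check directly from the explicit formula — so by Lemma~\ref{lem:decartes} (Descartes) there are no positive roots and no negative roots of $q_{l,k}(-y)$ can be ruled in the wrong direction; more precisely, since every coefficient is positive, $q_{l,k}$ has no positive real root, hence "all roots real and nonnegative" forces all roots to be $0$ or negative; but we actually want them nonnegative, so the correct reading is that purely imaginary per-spectrum $\iff$ all roots of $q_{l,k}(y)$ in $y=x^2$ are real and $\le 0$, i.e. $q_{l,k}(-t)$ has all roots real and $\ge 0$ for $t\ge 0$. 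So I would set $p_{l,k}(t) := q_{l,k}(-t)$ and ask when $p_{l,k}$ has three nonnegative real roots. (ii) Apply Lemma~\ref{lem:discrim}: $p_{l,k}$ has all real roots iff its discriminant $\Delta_{l,k} \ge 0$; combined with (from Descartes applied to $p_{l,k}$, whose sign pattern I would read off) the roots all being nonnegative, this becomes a single inequality $\Delta_{l,k}\ge 0$ in the integer parameters $l,k$.

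The final step is to solve $\Delta_{l,k} \ge 0$. I anticipate $\Delta_{l,k}$ is, for each fixed $l$, a quadratic (or at worst cubic) in $k$ with positive leading term, so the solution set is $k \ge k_0(l)$ for some threshold $k_0(l)$; the content of the theorem is the claim that $k_0(l) = \max(0,\,4-l)$ when $l \le 3$ (equivalently $l+k\ge 4$) and $k_0(l) = 2l-4$ when $l \ge 4$. I would verify this by direct computation: plug $l = 1,2,3$ and check the discriminant condition reduces to $l+k\ge 4$ (with small-case bookkeeping for the boundary, e.g. confirming $k=0$ works exactly when $l\ge4$ is false — here $l+k\ge4$ covers it), and for $l\ge 4$ show $\Delta_{l,k}\ge 0 \iff k\ge 2l-4$, possibly by exhibiting the factorization of $\Delta_{l,k}$ or by locating the relevant root in $k$ explicitly. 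The main obstacle is precisely this: the discriminant of a cubic whose coefficients are quadratic in $k$ and polynomial in $l$ is a messy expression, and coaxing it into the clean two-case form stated in the theorem — and checking that no spurious region of positive $\Delta$ with a negative root sneaks in — will require careful but routine algebra, ideally aided by the symbolic computation the paper already uses (Sagemath). One should also double-check the edge cases $l=0$ and small $k$ where $\mathcal{H}$ may drop degree, and confirm the $q_{l,k}$-versus-$p_{l,k}$ sign conventions so that "nonnegative roots in $t$" is correctly enforced rather than merely "real roots".
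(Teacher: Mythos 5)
Your proposal follows essentially the same route as the paper: apply Lemma~\ref{thm:coal_graph_tree}, write $\mathcal{H}(G_1,T)$ as $x^l$ times an even sextic, pass to the cubic in $y=x^2$ (equivalently the paper's substitution $q(x)=-p(i\sqrt{x})$), and combine the cubic discriminant criterion (Lemma~\ref{lem:discrim}) with Descartes' rule (Lemma~\ref{lem:decartes}) to reduce the theorem to the single inequality $\Delta\ge 0$, which the paper, like you, then settles by direct checking of the two stated cases. The one correction: for a degree-$3$ root, $K_{2,3}-r_1$ is the star $K_{1,3}$ with $\pi(K_{1,3},x)=x^2(x^2+3)$ (not $C_4$ or a path --- $C_4$ is what removing a degree-$2$ root gives, which is the next theorem), so the sextic is $x^6+(l+k+6)x^4+(6l+3k+12)x^2+12l$, and with that your analysis, including the $l=0$ boundary case where the constant term vanishes, goes through exactly as in the paper.
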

\begin{proof}
    We have $\pi(K_{2,3},x) = x(x^4+6x^2+12)$ and $\pi(K_{1,3},x) = x^2(x^2+3)$. Now consider the polynomial
\begin{align*}
    \mathcal{H}(G_1,T)& =\pi(G_1,x)\pi(T',x) + k\pi(G_1-r_1,x)\pi(T'-u',x)\\&= \pi(K_{2,3},x)\pi(K_{1,l}, x) + k\pi(K_{1,3},x)\pi(lK_1,x) \\
    &=x(x^4+6x^2+12)x^{l-1}(x^2+l) + kx^2(x^2+3)x^{l} \\
    &=x^l \left (x^6+(l+k+6)x^4+(6l+3k+12)x^2+12l \right ).
\end{align*}
Let $p(x) = x^6+(l+k+6)x^4+(6l+3k+12)x^2+12l$, and $q(x)=-p(i\sqrt{x})=x^3-(l+k+6)x^2+(6l+3k+12)x-12l$. Then, $p(x)$ has all its roots purely imaginary if and only if $q(x)$ has all its roots nonnegative real. 
The discriminant of $q(x)$ is
\begin{align*}
\Delta = 18(−(l+k+6))(6l+3k+12)(−12l)−4(−(l+k+6))^3(−12l)\\+((−(l+k+6))2)(6l+3k+12)^2−4(6l+3k+12)^3−27(−12l)^2.
\end{align*}
It is easy to check that $\Delta\geq 0$ if and only if one of the two conditions in the statement of the theorem holds. By Lemma \ref{lem:discrim}, $q(x)$ has all its roots real if and only if $\Delta\geq 0$. Suppose that $\Delta \geq 0$, and let $\alpha, \beta, \gamma$ be the real roots of $q(x)$. Since $l\geq 0$ and $k\geq 0$, the sign pattern of the coefficients of $q(x)$ is either $[+,-,+,-]$ or $[+,-,+,0]$. Hence, the sign pattern of the coefficients of $q(-x)$ is either $[-,-,-,-]$ or $[-,-,-,0]$. By Lemma \ref{lem:decartes}, the number of negative roots of $q(x)$ is zero. This shows if $\Delta \geq 0$, then all roots of $q(x)$ are nonnegative. The proof follows by Lemma \ref{thm:coal_graph_tree}. 
\end{proof}

\begin{ex}
    Let $k=n$ and $l=0$. Then we get a star $T=K_{1,n}$ is rooted at the central vertex. Let $G_n$ be the coalescence of $K_{2,3}$ (rooted at a vertex of degree $3$) and $K_{1,n}$ (rooted at a vertex of degree $n$), then it has purely imaginary per-spectrum if and only if $n\geq 4$.
\end{ex}

\begin{ex}
    Let $k=1$ and $l=n-1$, then we get a star $T=K_{1,n}$ rooted a pendant vertex. Let $G_n$ be the coalescence of $K_{2,3}$ (rooted at a vertex of degree $3$) and $K_{1,n}$ (rooted at a vertex of degree $1$), then it has all its roots purely imaginary if and only if $n=4$. 
\end{ex}

\begin{rem}\label{rem:k23_3_pathlike}
In case when the rooted tree $T = \mathcal{R}(P_{l+1}^{(k)},r_1)$ is pathlike, substituting $\pi(T',x)=\pi(P_{l+1},x)$ and $\pi(T'-u')=\pi(P_{l},x)$, we get $$\mathcal{H}(G_1,T) = x\left ((x^4+6x^2+12)\pi(P_{l+1},x) + kx(x^2+3)\pi(P_{l},x)\right ).$$ 
Our proof method in Theorem \ref{thm:k23-starlike} depended on reducing the problem to checking real roots of a cubic polynomial. For polynomials of higher degree, there do not appear to be method at hand other than direct computation of roots. For the case $l\leq 7$, this polynomial has all its roots purely imaginary if one of the following holds:
\begin{enumerate}
    \item $k\geq 4$ when $l=0$,
    \item $k\geq 3$ when $l=1,2,3,4$,
    \item $k\geq 13$ when $l=7$.
\end{enumerate}
These conditions are obtained by computations (see Appendix).
\end{rem}
Next, we make a degree $2$ vertex of $K_{2,3}$ a root. 
\begin{thm}
    The coalescence of $(K_{2,3}, r_1)$ such that $degree(r_1)=2$ with a rooted starlike tree $\mathcal{R}(K_{1,l}^{(k)},r_2)$ has purely imaginary per-spectrum if and only if $l+k\geq 4$ and $l\leq 2$.
\end{thm}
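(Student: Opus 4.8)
The plan is to imitate the proof of Theorem~\ref{thm:k23-starlike} almost line for line; the only new input is that the root of $K_{2,3}$ is now a degree-$2$ vertex. First I would record the two polynomials feeding into $\mathcal{H}(G_1,T)$: we still have $\pi(K_{2,3},x)=x(x^4+6x^2+12)$, while deleting a degree-$2$ vertex from $K_{2,3}$ leaves the complete bipartite graph $K_{2,2}=C_4$, so $\pi(K_{2,3}-r_1,x)=\pi(C_4,x)=(x^2+2)^2$ (this can be read off from Proposition~\ref{sachs-per}). Since the rooted subtree satisfies $\pi(T',x)=\pi(K_{1,l},x)=x^{l-1}(x^2+l)$ and $\pi(T'-u',x)=x^l$, Lemma~\ref{thm:coal_graph_tree} gives
$$\mathcal{H}(G_1,T)=x^l\Bigl((x^4+6x^2+12)(x^2+l)+k(x^2+2)^2\Bigr)=x^l\,p(x),$$
with $p(x)=x^6+(l+k+6)x^4+(6l+4k+12)x^2+(12l+4k)$. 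Discarding the factor $x^l$, the coalescence lies in $\mathcal{G}$ iff $p$ has all roots purely imaginary, iff $q(x):=-p(i\sqrt{x})=x^3-(l+k+6)x^2+(6l+4k+12)x-(12l+4k)$ has all roots real and nonnegative. Exactly as in Theorem~\ref{thm:k23-starlike}, the coefficient sign pattern of $q(-x)$ has no sign change for $l,k\ge0$, so by Lemma~\ref{lem:decartes} $q$ has no negative root; hence ``all roots real'' is already equivalent to ``all roots nonnegative'', and by Lemma~\ref{lem:discrim} this holds iff the discriminant $\Delta$ of $q$ is nonnegative. (One may assume $k\ge1$; for $k=0$ the coalescence is just $K_{2,3}\notin\mathcal{G}$, and the claimed conditions fail.)

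The substantive task is to verify that $\Delta\ge0$ iff $l\le2$ and $l+k\ge4$. Applying the discriminant formula of Lemma~\ref{lem:discrim} to $q$ (with $B=-(l+k+6)$, $C=6l+4k+12$, $D=-(12l+4k)$) and collecting in powers of $k$, one obtains a polynomial in $k$ of degree at most $3$ whose constant term simplifies to $-12\bigl((l-3)^2+3\bigr)^2<0$; this already explains why $k=0$ never works, for any $l$. I would then argue by cases on $l$. For $l=0$: $\Delta=32(k^3-54)$, which is $\ge0$ iff $k\ge4$, since $k$ is a nonnegative integer and $3^3<54\le4^3$. For $l=1$: $\Delta=16k^3+52k^2-88k-588$; its derivative is positive for $k\ge1$, while $\Delta$ is negative at $k=0,1,2$ and positive at $k=3$, so $\Delta\ge0$ iff $k\ge3$. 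For $l=2$: one has $D=-C$, which collapses $\Delta$ to $16(k+6)(k-2)$, nonnegative iff $k\ge2$. In all three cases the threshold is exactly $l+k\ge4$. For $l\ge3$ the goal is $\Delta<0$ for every $k\ge0$: when $l\ge5$ all coefficients of the cubic in $k$ are negative (the linear coefficient is $-8l(5l^2-30l+36)$ and $5l^2-30l+36>0$ once $l\ge5$), so $\Delta<0$ outright; the two remaining values $l=3$ (where $\Delta=-16k^3-108k^2+216k-108$) and $l=4$ (where $\Delta=-32k^3-320k^2+128k-192$) are dispatched by checking $k=0$ and $k=1$ and observing that the negated cubic is increasing in $k$ for $k\ge1$. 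Combining with Lemma~\ref{thm:coal_graph_tree} yields the theorem.

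The main obstacle is the discriminant bookkeeping. Unlike in Theorem~\ref{thm:k23-starlike}, the parameters $l$ and $k$ play asymmetric roles, the cutoff on $k$ depends on $l$, and the window $l\le2$ has to be separated from $l\ge3$ by an explicit sign analysis of a cubic in $k$; one must also be mildly careful that at $l=0$ the inequality $\Delta\ge0$ literally reads $k^3\ge54$, so that the conclusion ``$k\ge4$'' genuinely relies on $k$ being an integer. Beyond this, the argument is a mechanical transcription of the one already used for the degree-$3$ root, with $\pi(C_4,x)=(x^2+2)^2$ as the sole new structural fact.
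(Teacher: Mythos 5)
Your proposal is correct and follows essentially the same route as the paper, which at this point simply says ``by a similar analysis done in Theorem \ref{thm:k23-starlike}'': you form $\mathcal{H}(G_1,T)=x^l\bigl(x^6+(l+k+6)x^4+(6l+4k+12)x^2+(12l+4k)\bigr)$ using $\pi(K_{2,3}-r_1,x)=\pi(C_4,x)=(x^2+2)^2$, pass to the cubic $q$, and combine Lemma \ref{lem:discrim} with Lemma \ref{lem:decartes}. Your explicit discriminant bookkeeping checks out (the coefficients of the cubic in $k$ are indeed $32-16l$, $-44l^2+96l$, $-8l(5l^2-30l+36)$ and $-12((l-3)^2+3)^2$, giving exactly the threshold $l\le 2$, $l+k\ge 4$), so you have merely supplied the details the paper leaves implicit.
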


\begin{proof}
    We have $\pi(K_{2,3},x) = x(x^4+6x^2+12)$ and $\pi(C_4,x) = (x^2+2)^2$. Now consider the polynomial 
\begin{align*}
    \mathcal{H}(G_1,T)&=\pi(G_1,x)\pi(T',x) + k\pi(G_1-r_1,x)\pi(T'-u',x)\\&= \pi(K_{2,3},x)\pi(K_{1,l}, x) + k\pi(C_4,x)\pi(lK_1,x) \\
    &=x(x^4+6x^2+12)x^{l-1}(x^2+l) + k(x^2+2)^2x^{l} \\
    &=x^l \left ( x^6 +(l+k+6)x^4+(6l+4k+12)x^2+(12l+4k)\right ).
\end{align*}
	By a similar analysis done in Theorem \ref{thm:k23-starlike}, $\mathcal{H}(G_1,T)$ has all its roots purely imaginary if and only if $l+k\geq 4$ and $l\leq 2$. The proof follows by Lemma \ref{thm:coal_graph_tree}. 
\end{proof}

\begin{ex}
    Let $k=n$ and $l=0$, then we get a star $K_{1,n}$ rooted at the central vertex. Let $G_n$ be the coalescence of $K_{2,3}$ (rooted at a vertex of degree $2$) and $K_{1,n}$ (rooted at a vertex of degree $n$). It has purely imaginary per-spectrum if and only if $n\geq 4$.
\end{ex}

\begin{rem}\label{rem:k23_2_pathlike}
    Similarly, in case when the rooted tree $T = \mathcal{R}(P_{l+1}^{(k)},r_1)$ is pathlike, we get $$\mathcal{H}(G_1,T) = x(x^4+6x^2+12)\pi(P_{l+1},x) + k(x^2+2)^2\pi(P_{l},x).$$ 
    For the case $l\leq 6$, this polynomial has all its roots purely imaginary if one of the following holds (see Appendix): 
\begin{enumerate}
    \item $k\geq 4$ when $l=0$,
    \item $k\geq 3$ when $l=1,2$,
    \item $k=3$ when $l=3$,
    \item $k\geq 19$ when $l=5$,
    \item $k\geq 9$ when $l=6$.
\end{enumerate}
\end{rem}

\begin{rem}
    By computations using Sagemath \cite{sagemath}, we observe that $G_8$ is the smallest connected bipartite graph in $\mathcal{G}$ which has an even subdivision of $K_{2,3}$ (see Figure \ref{fig:g8_g11}), and there are no other such graphs on $8$ vertices in $\mathcal{G}$. On $9$ vertices, there are $8$ graphs in $\mathcal{G}$ which contain an even subdivision of $K_{2,3}$ (see Figure \ref{fig:n9_egs_in_g}). Note that $7$ of them can be obtained using the single coalescence of either $K_{2,3}$ or $\Theta_{3,1,1}$ with a starlike or a pathlike tree. The remaining example requires coalescences at two different roots. 

\begin{figure}[ht]
    \centering
    \begin{tabular}{ | c | c| c| } 
  \hline
  The graph $G_1$ & Coalescence of $G_1$ with rooted tree  \\ 
  \hline
  $K_{2,3}$ rooted at a degree $3$ vertex & 
  \includegraphics[scale=0.4]{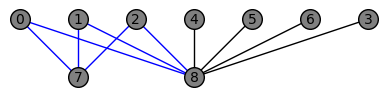}  \includegraphics[scale=0.4]{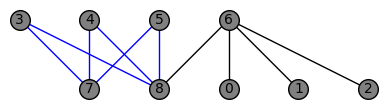}\\
  \hline
  $K_{2,3}$ rooted at a degree $2$ vertex &\includegraphics[scale=0.4]{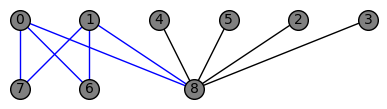} \\
  \hline
  $\Theta_{3,1,1}$ rooted at a degree $3$ vertex&  \includegraphics[scale=0.4]{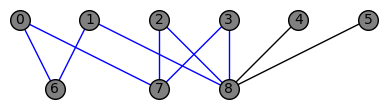}  \includegraphics[scale=0.4]{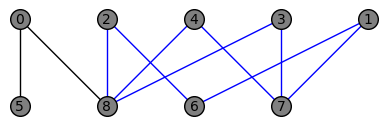}\\
  \hline 
  $\Theta_{3,1,1}$ rooted at a degree $2$ vertex&
    \includegraphics[scale=0.4]{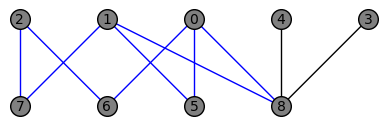} 
    \includegraphics[scale=0.4]{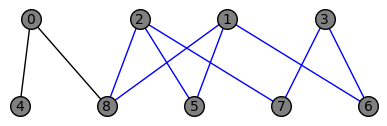}\\
  \hline
  $\Theta_{3,1,1}$ with two root vertices of degree $3$ each &\includegraphics[scale=0.4]{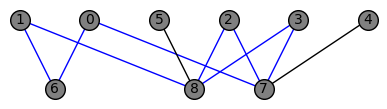} \\ 
  \hline
\end{tabular}
    \caption{Connected bipartite graphs on $9$ vertices in $\mathcal{G}$ which have an even subdivision of $K_{2,3}$ as a subgraph}
    \label{fig:n9_egs_in_g}
\end{figure}

\end{rem}

Next, we produce nonplanar graphs with purely imaginary per-spectrum.  
\begin{rem}\label{k33_starlike}
	Consider the coalescence of $K_{3,3}$ rooted at any vertex with a rooted starlike tree $T = \mathcal{R}(K_{1,l}^{(k)},r_2)$. Then, 
	\begin{align*}
    \mathcal{H}(G_1,T) &= \pi(G_1,x)\pi(T',x) + k\pi(G_1-r_1,x)\pi(T'-u',x) \\
    &= \pi(K_{3,3},x)\pi(K_{1,l}, x) + k\pi(K_{2,3},x)\pi(lK_1,x)\\
    &=(x^6+9x^4+36x^2+36)x^{l-1}(x^2+l) + kx(x^4+6x^2+12)x^{l} \\
    &=x^{l-1} \left ( x^8+ (k+l+9)x^6+(6k+9l+36)x^4+(12k+36l+36)x^2+36l \right ).
\end{align*}
This polynomial has all its roots purely imaginary if $(k,l)\in \{(5,1), (6,0), (7,0) \}$ (see Appendix). In case tree is pathlike $T = \mathcal{R}(P_{l+1}^{(k)},r_2)$, we have 
\begin{align*}
    \mathcal{H}(G_1,T)& =\pi(G_1,x)\pi(T',x) + k\pi(G_1-r_1,x)\pi(T'-u',x)\\&= \pi(K_{3,3},x)\pi(P_{l+1},x) + k\pi(K_{2,3},x)\pi(P_{l},x) \\&=(x^6+9x^4+36x^2+36)\pi(P_{l+1},x) + kx(x^4+6x^2+12)\pi(P_{l},x).
\end{align*}
    This polynomial has all its roots purely imaginary if $(k,l)\in \{ (5,1), (6,0), (7,0) \}$ (see Appendix). Hence, by Lemma \ref{thm:coal_graph_tree}, the coalescence of $K_{3,3}$ rooted at any vertex with a rooted starlike tree $T = \mathcal{R}(K_{1,l}^{(k)},r_2)$ or a rooted pathlike tree $T = \mathcal{R}(P_{l+1}^{(k)},r_2)$ has purely imaginary per-spectrum if $(k,l)\in \{ (5,1), (6,0), (7,0) \}$.
\end{rem}



\section{Conclusion}

There are various ways in which one can make use of the construction idea. The graph $G_1$ in Lemma \ref{thm:coal_graph_tree} can be taken to be any theta graph with odd parameters or any graph containing it. One can also consider multiple coalescences (see \cite{godsil1978new}) or a different definition of the rooted product (see neighbourhood rooted product \cite{godsil1982constructing}). The second graph $T$ need not be only a starlike or a pathlike tree. For example, the coalescence of the cycle $C_6$ and $K_{2,3}$ (rooted at a degree $3$ vertex) has purely imaginary per-spectrum, but it is not obtainable using the construction described in this article. Accounting for such cases is the subject of future study.

\section*{Acknowledgements}
This work is supported by Department of Science and Technology (Govt. of India) through project DST/04/2019/002676. We would also like to thank anonymous referees for suggestions that helped improve this paper. 

\bibliographystyle{plain}
\bibliography{ref}

\section*{Appendix: Computations}\label{sect:comp}
We use Sagemath \cite{sagemath} to examine the roots of the polynomials. 
\lstset{language=Python, mathescape=true,numbers=left,
numberstyle=\tiny, stepnumber=5, numberfirstline=false, firstnumber=1,
basicstyle=\small\ttfamily,columns=fixed, breaklines=true, commentstyle=\textsl,showstringspaces=false}

\begin{lstlisting}
# Create a list of the permanental polynomial of path graphs to be used later
def funct(n):
    a = []
    for i in range(n):
        a.append(add(binomial(i-m,i-2*m)*var("x")^(i-2*m) for m in range(0,floor(i/2)+1)))
    return a 
a = funct(31)

# Define a function that returns 1 if all roots of a given polynomial are purely imaginary
def check(poly):
    roots = poly.roots(x, ring=CC)
    for r in roots:
        if r[0].real() != 0:
            return 0
    return 1
        
# Define the polynomial involved in Remark 3.3.
def poly1(l,k):
    return (var("x")^4+6*var("x")^2+12)*a[l+1] + k*var("x")*(var("x")^2+3)*a[l]
    
# Find all (l,k) pairs such that this polynomial has purely imaginary roots
res1 = []
for l in range(30):
    for k in range(30):
        if check(poly1(l,k)) == 1:
            res1.append((l,k))  
\end{lstlisting}
Similarly, we can define the following polynomials involved in Remark \ref{rem:k23_2_pathlike} and Theorem \ref{k33_starlike}, and examine their roots. 
The output of these programs for $l\leq 10$ and $k\leq 30$ are shown in Figure \ref{fig:scatter}. 
\begin{figure}[ht]
	\centering
	\includegraphics[scale=0.4]{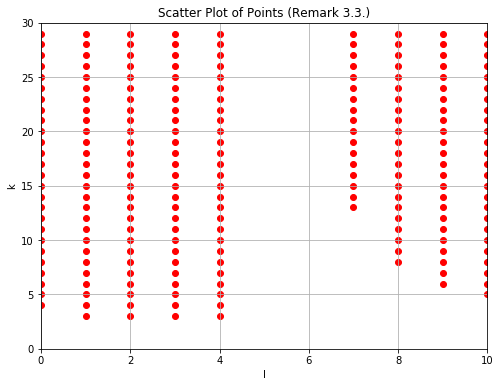}
	\includegraphics[scale=0.4]{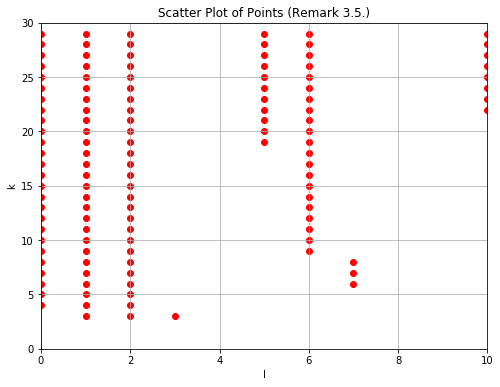}\\
	\includegraphics[scale=0.4]{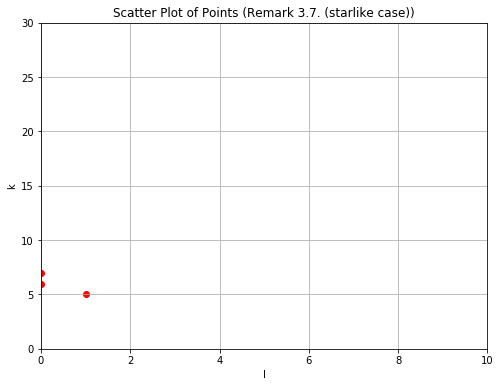}
	\includegraphics[scale=0.4]{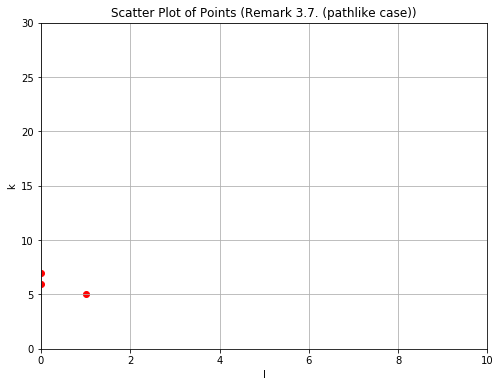}
	\caption{Scatter plot of points $(l,k)$ where the constructed graphs have purely imaginary per-spectrum.}
	\label{fig:scatter}
\end{figure}

\end{document}